\definecolor{mycolor}{HTML}{C7EDCC}    
\newtheorem{theorem}{Theorem}[section]
\newtheorem{definition}[theorem]{Definition}
\newtheorem{lemma}[theorem]{Lemma}
\newtheorem{proposition}[theorem]{Proposition}
\newtheorem{corollary}[theorem]{Corollary}
\newtheorem{remark}[theorem]{Remark}
\newtheorem{example}[theorem]{Example}
\newcommand{\Aut}{\mathrm{Aut}}
\newcommand{\Hom}{\mathrm{Hom}}
\newcommand{\cone}{\mathrm{cone}}
\newcommand{\mmod}{\mathrm{mod}}
\newcommand{\rep}{\mathrm{rep}}
\newcommand{\Coh}{\mathrm{Coh}}
\newcommand{\linp}[1]{\left\langle#1\right\rangle}
\newcommand{\udv}{\underline{\dim}}
\newcommand{\Ss}{\mathrm{SS}}
\newcommand{\rank}{\mathrm{rank}}
\newcommand{\add}{\mathrm{add}}
\newcommand{\proj}{\mathrm{proj}}
\title[Stability Conditions and HN Filtrations for Triangulated Categories]{
Stability Conditions and Harder-Narasimhan Filtrations for Triangulated Categories
}
\author[Wenyu Gao and Fan Xu]{
Wenyu Gao* and Fan Xu
}
\address{Department of Mathematical Sciences\\
Tsinghua University\\
Beijing 100084, P.~R.~China} \email{gwy23@mails.tsinghua.edu.cn (W. Gao)}
\address{Department of Mathematical Sciences\\
Tsinghua University\\
Beijing 100084, P.~R.~China} \email{fanxu@mail.tsinghua.edu.cn (F. Xu)}
\keywords{Derived Hall algebra, Triangulated category, Stability condition, Harder-Narasimhan filtration, Wall-crossing Formula}
	\thanks{$*$~Corresponding author.}
\begin{document}

\begin{abstract}

In this paper, we investigate the relationships between Harder-Narasimhan filtrations and derived Hall algebras. We extend several results from abelian categories to triangulated categories, including Reineke inversions, wall-crossing formulas, and Joyce's elements $\epsilon_\gamma$. The results in triangulated categories can be summarized via a diagram of the same form of that in abelian categories. As an application, we characterize all possibilities for stability conditions on $D^b(\rep A_2)$.
\end{abstract}
\maketitle
\section{Introduction}
    In \cite{Rei2002}, Reineke introduced the notion of HN filtration in representation categories of quivers, as an analog of a concept with the same name in the theory of moduli of vector bundles on curves. He established the relationship between stability functions and the Ringel-Hall algebras of representation categories. 
    
    Reineke's result was rewritten and summarized by Bridgeland and Toledano Laredo in \cite{BT2012} as the right part of the following diagram:
    \begin{center}
    \begin{tikzcd}
(\epsilon) \arrow[rr, "\exp", bend left] &  & (\delta) \arrow[ll, "\log", bend left] \arrow[rr, "\text{Reineke inversion}", bend left] &  & (\kappa) \arrow[ll, "\text{HN filtration}", bend left]
\end{tikzcd}
\end{center}
The precise definition of the symbols in the diagram will be introduced in Section 3. Roughly speaking, $(\kappa)$ and $(\delta)$ are characteristic functions, supported on all and semistable objects of a certain dimension vector, respectively.

In the left part of the diagram, the symbol $(\epsilon)$ denotes the elements defined by Joyce in \cite{Joy2007}. These are not characteristic functions in general, but they are supported on indecomposable elements. Thus, the diagram illustrates the relationships among all objects, semistable objects, and indecomposable objects.

Reineke also proved an identity \cite[Proposition 4.12]{Rei2002}, which is written as
\[1_\mathcal{A}=\prod^\leftarrow_{\phi\in(0,1]}\Ss^Z_\phi\]
in the notation of Bridgeland and Toledano Laredo. Although it does not appear in the diagram, it plays a major role in the theory. We will call it the wall-crossing formula in this paper. It characterizes the existence and uniqueness of the Harder–Narasimhan filtration for a certain object in $\mathcal{A}$, while implying that the descending product on the right-hand side of the identity is an invariant under the change of stability functions.
    
    Reineke's work was generalized by Joyce in \cite{Joy2007'} and \cite{Joy2008}, where Joyce raised the question \cite[Problem 7.1]{Joy2008} of introducing a parallel theory on triangulated categories. Following this question, Bridgeland \cite{Bri2002} introduced the notion of stability conditions on triangulated categories, which can be defined on bounded derived categories of abelian categories. Regarding Hall algebras, T\"{o}en \cite{Toe2006} constructed an associative algebra for each derived category, called the derived Hall algebra. This construction was later extended by Xiao and Xu \cite{XX2006} to triangulated categories satisfying certain conditions. They also provided a direct proof of the associativity of the derived Hall algebras.

    With the preparation above, we are now ready to generalize Reineke's work to triangulated categories satisfying both the conditions of Bridgeland and those of Xiao and Xu. The categories satisfying all the conditions do exist. For example, the bounded derived category of a $\Hom$-finite Krull-Schmidt abelian category satisfies all the conditions.

    In such a category, Wang and Chen \cite{WC2023} investigated the HN filtrations  using the language of derived Hall algebras. Their result is similar in form to that of Reineke. The present paper follows their approach and recovers the entirety of Reineke's theory. We summarize all our results again in a diagram that shares the same structure as the one by Bridgeland and Toledano Laredo:
    \begin{center}
    \begin{tikzcd}
(\epsilon) \arrow[rr, "\exp", bend left] &  & (\delta) \arrow[ll, "\log", bend left] \arrow[rr, "\text{Reineke inversion}", bend left] &  & (\kappa) \arrow[ll, "\text{HN filtration}", bend left]
\end{tikzcd}
\end{center}
where the meaning of each symbol may differ slightly from that in abelian categories, and we will define and examine them thoroughly in Section 4. We also present the wall-crossing formula within the context of triangulated categories:
\[1_\mathcal{D}=\prod^\leftarrow_{\phi\in\mathbb{R}}\Ss^\sigma_\phi.\]

As an application of our results, we determine all the stability conditions on the category $D^b(\rep A_2)$, classified by their hearts and the phases of objects in those hearts. We list all $12$ possibilities for stability conditions in a table at the end of this paper.

The result of this paper might be a partial answer to the question \cite[Problem 7.1]{Joy2008} raised by Joyce. However, providing a complete answer to this question remains a very challenging task.

\subsection{Organization of the Paper}
    This paper is organized as follows: 
    
    In Section 2, we review fundamental concepts related to quiver representations and stability functions in abelian categories. 
    
    In Section 3, we revisit the summary by Bridgeland and Toledano Laredo on Reineke’s work in \cite{BT2012}. 
    
    In Section 4, we generalize the theory to triangulated categories, which is the main contribution of this paper. 
    
    Finally, in Section 5, we present an example for the case of $A_2$, by determining all probabilities of stability conditions, hearts, wall-crossing formulas, and relations between them.

\subsection{Conventions and Notations}
    In this paper, every category is assumed to be additive, and every subcategory is assumed to be full. For a category $\mathcal{C}$, we write $X\in\mathcal{C}$ for that $X$ is an object in $\mathcal{C}$, and $\mathcal{B}\subseteq\mathcal{C}$ for that $\mathcal{B}$ is a subcategory of $\mathcal{C}$.

    For a category $\mathcal{C}$, we use $K^b(\mathcal{C})$ to denote the bounded homotopy category of $\mathcal{C}$. For an abelian category $\mathcal{A}$, we use $D^b(\mathcal{A})$ to denote the bounded derived category of $\mathcal{A}$.

    Let $\mathcal{S}$ be a subset of $\mathcal{C}$. We denote by $\add\mathcal{S}$ the additive hull of $\mathcal{S}$, namely the smallest additive subcategory of $\mathcal{C}$ containing all direct summands of finite direct sums of objects in $\mathcal{S}$. If $\mathcal{S}=\{X_1,\cdots,X_n\}$ is finite, we also write $\add(X_1,\cdots,X_n)=\add\mathcal{S}$. Let $\mathcal{A}$ be an abelian category, then the subcategory of $\mathcal{A}$ consisting of all projective objects in $\mathcal{A}$ is additive, which is denoted by $\proj\mathcal{A}$. 

    For a triangulated category $\mathcal{D}$, we use $[1]$ to denote the shift functor of $\mathcal{D}$. For a triangle
    \[X\rightarrow Y\rightarrow Z\rightarrow X[1],\]
    we may write it as
    \begin{center}
        \begin{tikzcd}
    X \arrow[rr] &                      & Y \arrow[ld] \\
                 & Z \arrow[lu, dashed] &             
    \end{tikzcd}
    \end{center}
    when necessary.
\section{Stability Functions on Abelian Categories}
In this section, we briefly review some fundamental concepts and results in abelian categories. Our primary focus will be on the representation categories of quivers. For a more detailed exploration, we refer the reader to \cite{BT2012} and \cite{Rei2002}. 

\subsection{Quivers and Representations}
First, we provide our conventions regarding quivers.
\begin{definition}
    A \textbf{quiver} is a quadruple $Q=(Q_0,Q_1,s,t)$, where $Q_0$ and $Q_1$ are two finite sets, called the sets of \textbf{vertices} and \textbf{arrows}, respectively, and $s,t:Q_1\rightarrow Q_0$ are two maps, known as the \textbf{source} and \textbf{target} maps, respectively.
\end{definition}
\begin{definition}
    A \textbf{path} of length $l$ in a quiver $Q$ is defined to be a sequence $a_1,\dots,a_l$ of arrows in $Q$, such that $t(a_i)=s(a_{i+1})$ for $1\leqslant i\leqslant l-1$. A path above is denoted by $a_l\dots a_1$. A path $a_l\dots a_1$ is called a \textbf{cycle} if $s(a_1)=t(a_l)$. A quiver with no cycles is said to be \textbf{acyclic}.
\end{definition}
Let $k$ be a field, we denote the \textbf{path algebra} of $Q$ over $k$ by $kQ$. It is the $k$-algebras with basis the set of all paths in $Q$, as an $k$-vector space, with multiplication defined by connecting paths. We have the canonical equivalence between the category $\rep_kQ$ of (finite dimensional) representations of $Q$ over $k$ and the category $\mmod kQ$ of (finite dimensional) left $kQ$-modules. It is well known that both categories are abelian $k$-categories. See \cite{ASS2006}. We will always identify them and denote them by $\mathcal{A}$.

\subsection{Slope Functions}
Let us fix an integral linear form $\Theta$ on $\mathbb{Z}Q_0$, that is, a $\mathbb{Z}$-linear function
\[\Theta:\mathbb{Z}Q_0\rightarrow\mathbb{Z}.\]
Then we obtain a function
\begin{align*}
    \mu:\mathbb{N}Q_0\backslash\{0\}&\rightarrow\mathbb{Q}\\
    d=(d_i)_{i\in I}&\mapsto{\frac{\sum_{i\in I}\Theta(d_i)}{\sum_{i\in I}d_i}},
\end{align*}
called the \textbf{slope function} associated to $\Theta$. For $M\in\mathcal{A}$, we always write
\[\mu(M)=\mu(\udv M).\]
\begin{definition}
    Let $\mu$ be as above. An object $0\neq M\in\mathcal{A}$ is said to be $\mu$-\textbf{semistable} if $\mu(U)\leqslant\mu(M)$ for any $0\neq U\subseteq M$. $M$ is said to be $\mu$-\textbf{stable} if $\mu(U)<\mu(M)$ for any $0\neq U\subsetneqq M$.
\end{definition}
The following three lemmas are all easy and well known. We refer the reader to \cite{Rei2002}.
\begin{lemma}
    Let $0\rightarrow L\rightarrow M\rightarrow N\rightarrow 0$ be a short exact sequence in $\mathcal{A}$. Then
    \begin{enumerate}
        \item $\mu(L)\leqslant\mu(M)\iff\mu(M)\leqslant\mu(N)\iff\mu(L)\leqslant\mu(N)$;
        \item $\min(\mu(L),\mu(N))\leqslant\mu(M)\leqslant\max(\mu(L),\mu(N))$;
        \item if $\mu(L)=\mu(M)=\mu(N)$, then $M$ is $\mu$-semistable if and only if $L$ and $N$ are both $\mu$-semistable.
    \end{enumerate}
\end{lemma}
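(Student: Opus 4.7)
The plan is to reduce all three parts of the lemma to the elementary mediant inequality for fractions. First I would set $a=\Theta(\udv L)$, $b=\Theta(\udv N)$, and let $p,q$ denote the total dimensions $\sum_i(\udv L)_i$, $\sum_i(\udv N)_i$; additivity of $\udv$ on the short exact sequence then yields
\[\mu(L)=\tfrac{a}{p},\qquad \mu(N)=\tfrac{b}{q},\qquad \mu(M)=\tfrac{a+b}{p+q},\]
with $p,q>0$ since $L$ and $N$ are nonzero.

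For part (1), my plan is to clear denominators in each of the three comparisons $\mu(L)\leqslant\mu(M)$, $\mu(M)\leqslant\mu(N)$, and $\mu(L)\leqslant\mu(N)$, and observe that all three reduce to the single linear inequality $aq\leqslant bp$. Part (2) follows at once: assuming without loss of generality $\mu(L)\leqslant\mu(N)$, (1) sandwiches $\mu(M)$ between the two extremes, and the reverse case is symmetric.

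Part (3) is the substantive one, and I would split it into two directions. Setting $m=\mu(L)=\mu(M)=\mu(N)$, for the forward direction I would take any $0\neq U\subseteq M$ and consider the short exact sequence
\[0\to U\cap L\to U\to U/(U\cap L)\to 0,\]
whose quotient embeds into $M/L=N$. I would handle the degenerate cases $U\cap L=0$ and $U\cap L=U$ separately, using the semistability of $N$ and $L$ respectively; in the remaining case both endpoints have slope $\leqslant m$, so (2) forces $\mu(U)\leqslant m$. For the converse, any nonzero subobject of $L$ is a nonzero subobject of $M$, which gives semistability of $L$ immediately; for $0\neq U'\subseteq N$, I would pull back along $M\twoheadrightarrow N$ to form the exact sequence $0\to L\to U\to U'\to 0$, apply semistability of $M$ to bound $\mu(U)\leqslant m=\mu(L)$, and invoke (1) on this new sequence to transfer the bound to $\mu(U')$.

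The main obstacle is essentially bookkeeping in (3): one has to treat the degenerate cases where one term of the associated short exact sequence is zero separately, since (2) does not apply to such sequences, and in the converse one must remember that the equality $\mu(U)=\mu(L)$ case requires (1) rather than a simple comparison. Everything else is mechanical arithmetic on the three fractions $a/p$, $b/q$, $(a+b)/(p+q)$, so this case analysis is the only real point of care.
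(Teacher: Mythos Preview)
Your proposal is correct. The paper does not actually prove this lemma: it simply declares it ``easy and well known'' and refers the reader to \cite{Rei2002}, so there is no in-paper argument to compare against. Your reduction of (1) and (2) to the mediant inequality $aq\leqslant bp$ is the standard one, and your treatment of (3) via intersecting with $L$ in one direction and pulling back along $M\twoheadrightarrow N$ in the other is exactly the expected argument. One small remark: when you ``invoke (1)'' at the end to pass from $\mu(U)\leqslant m=\mu(L)$ to $\mu(U')\leqslant m$, you are really using the $\geqslant$-version of (1), which follows from the same algebra (all three comparisons reduce to $aq\geqslant bp$); your parenthetical about the equality case shows you are aware of this, but it is worth stating explicitly.
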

\begin{lemma}
For $0\neq M\in\mathcal{A}$.
\begin{enumerate}
    \item If $M$ is $\mu$-stable, then $M$ is indecomposable.
    \item If $M$ is simple, then $M$ is $\mu$-stable.
\end{enumerate}
\end{lemma}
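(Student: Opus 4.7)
The plan is to treat the two parts separately, since they have quite different flavours. Part (2) should follow immediately from the definition of $\mu$-stability: a simple object has no subobject $U$ with $0 \neq U \subsetneq M$, so the universally quantified condition ``$\mu(U) < \mu(M)$ for every $0 \neq U \subsetneq M$'' is vacuously satisfied. I expect this part to be a single-line verification with no content beyond unpacking the definitions.

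For part (1), I would argue by contradiction. Suppose $M$ is $\mu$-stable but decomposable, say $M \cong M_1 \oplus M_2$ with both $M_1$ and $M_2$ nonzero. Each summand realizes as a nonzero proper subobject of $M$, so the stability hypothesis forces $\mu(M_1) < \mu(M)$ and $\mu(M_2) < \mu(M)$. On the other hand, the split short exact sequence
\[0 \to M_1 \to M \to M_2 \to 0\]
together with part (2) of the preceding lemma furnishes the upper bound $\mu(M) \leqslant \max(\mu(M_1), \mu(M_2))$. Combining these two ingredients gives $\mu(M) < \mu(M)$, the desired contradiction.

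I do not anticipate any genuine obstacle: both assertions reduce to direct applications of the definitions combined with the slope estimates for short exact sequences already recorded in the previous lemma. The only thing to be careful about is applying that lemma in the correct direction, namely bounding $\mu(M)$ \emph{above} by the maximum of the outer slopes, rather than trying to run the argument through an \emph{ad hoc} choice of subobject, which would needlessly complicate matters.
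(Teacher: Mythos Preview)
Your argument is correct: part (2) is indeed vacuous, and for part (1) the combination of $\mu(M_i) < \mu(M)$ from stability with the bound $\mu(M) \leqslant \max(\mu(M_1),\mu(M_2))$ from the preceding lemma yields the contradiction cleanly. The paper itself does not supply a proof of this lemma, instead declaring it ``easy and well known'' and referring the reader to \cite{Rei2002}, so there is nothing to compare against; your write-up is the standard one and would serve perfectly well.
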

\begin{lemma}
    If $\mu(M)>\mu(N)$, then $\Hom_\mathcal{A}(M,N)=0$.
\end{lemma}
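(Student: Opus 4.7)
The plan is to proceed by contradiction under the standard (and here tacit) hypothesis that both $M$ and $N$ are $\mu$-semistable, since otherwise the statement fails (e.g.\ take any $M$ that contains $N$ as a summand with $\mu(M)>\mu(N)$). Assuming $f:M\to N$ is a nonzero morphism, I would introduce its image $I=\im f$, which is nonzero, and break $f$ into the two short exact sequences
\[0\to\ker f\to M\to I\to 0,\qquad 0\to I\to N\to\Coker f\to 0.\]
The strategy is to sandwich $\mu(I)$ between $\mu(M)$ and $\mu(N)$ using the semistability hypotheses and the previous Lemma~2.3(1), which will force $\mu(M)\leqslant\mu(N)$, contradicting the hypothesis.

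More concretely, for the first sequence: if $\ker f=0$ then $I\cong M$ and trivially $\mu(I)=\mu(M)$; if $\ker f\neq 0$ then $\ker f\subseteq M$ with $M$ semistable gives $\mu(\ker f)\leqslant\mu(M)$, so Lemma~2.3(1) upgrades this to $\mu(M)\leqslant\mu(I)$. In either case $\mu(M)\leqslant\mu(I)$. For the second sequence, $I$ is a nonzero subobject of the semistable object $N$, hence $\mu(I)\leqslant\mu(N)$ directly from the definition of semistability. Concatenating the two inequalities yields $\mu(M)\leqslant\mu(N)$, which contradicts $\mu(M)>\mu(N)$, forcing $f=0$.

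The only genuine subtlety, and the step I would check most carefully, is the case distinction on whether $\ker f$ is zero: the slope function $\mu$ is only defined on nonzero objects, so applying Lemma~2.3(1) requires all three terms of the sequence to be nonzero. Once this case analysis is handled cleanly, the rest of the argument is essentially a one-line chase of inequalities. No other machinery beyond the three preceding lemmas (really just Lemma~2.3(1) together with the definition of semistability) is needed.
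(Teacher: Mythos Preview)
The paper does not supply its own proof of this lemma; it simply flags it as ``easy and well known'' and refers the reader to \cite{Rei2002}. Your argument is the standard one found there, and your observation that the semistability of $M$ and $N$ is a tacit hypothesis (without which the statement is false) is correct and worth making explicit.
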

\subsection{Stability Functions}
Now consider the group homomorphism
\begin{align*}
    Z:K_0(\mathcal{A})=\mathbb{Z}Q_0&\rightarrow\mathbb{C}\\
    d_j&\mapsto1+\Theta(d_j)i.
\end{align*}
Again, we write $Z(M)$ for $Z(\udv M)$, then we have
\[Z(M)\in\mathbb{H}=\{e^{i\pi\phi}|\phi\in (0,1)\}\]
for any $0\neq M\in\mathcal{A}$. Now we can give the definition of stability functions.
\begin{definition}
    Let $\mathcal{A}$ be an abelian category (no need to be a representation category), and $K_0(\mathcal{A})$ be its Grothendieck group. A \textbf{stability function} on $\mathcal{A}$ is a group homomorphism $Z:K_0(\mathcal{A})\rightarrow\mathbb{C}$, such that $Z(M)\in\mathbb{H}\cup\mathbb{R}_-$ for $0\neq M\in\mathcal{A}$.
\end{definition}
    For each $0\neq M\in\mathcal{A}$ there is a unique $\phi(M)\in(0,1]$ such that $Z(M)=m_Me^{i\pi\phi(M)}$ for some $m_M\in\mathbb{R}_+$. We call $\phi(M)$ the \textbf{phase} of $M$ under $Z$.
\begin{definition}
    Let $Z$ be a stability function on $\mathcal{A}$. An object $0\neq M\in\mathcal{A}$ is said to be $Z$-\textbf{semistable} if $\phi(U)\leqslant\phi(M)$ for any $0\neq U\subseteq M$. $M$ is said to be $Z$-\textbf{stable} if $\phi(U)<\phi(M)$ for any $0\neq U\subsetneqq M$.
\end{definition}
The notion of stability functions is a generalization of the notion of slope functions, and similarly we have the following three lemmas.
\begin{lemma}\label{phase in ses}
    Let $0\rightarrow L\rightarrow M\rightarrow N\rightarrow 0$ be a short exact sequence in $\mathcal{A}$. Then
    \begin{enumerate}
        \item $\phi(L)\leqslant\phi(M)\iff\phi(M)\leqslant\phi(N)\iff\phi(L)\leqslant\phi(N)$;
        \item $\min(\phi(L),\phi(N))\leqslant\phi(M)\leqslant\max(\phi(L),\phi(N))$;
        \item if $\phi(L)=\phi(M)=\phi(N)$, then $M$ is $Z$-semistable if and only if $L$ and $N$ are both $Z$-semistable.
    \end{enumerate}
\end{lemma}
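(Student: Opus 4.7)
The plan is to exploit the fact that $Z\colon K_0(\mathcal{A})\to\mathbb{C}$ is a group homomorphism, so the short exact sequence forces $Z(M)=Z(L)+Z(N)$. Parts (1) and (2) then reduce to an elementary planar statement about three nonzero vectors in $\mathbb{H}\cup\mathbb{R}_-$ summing in this way, and (3) is a standard subobject argument built on top of (1) and (2).

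For (1), I would use the identity
\[\mathrm{Im}\bigl(Z(X)\,\overline{Z(Y)}\bigr)=|Z(X)|\,|Z(Y)|\sin\bigl(\pi(\phi(X)-\phi(Y))\bigr).\]
Since $\phi(X),\phi(Y)\in(0,1]$ the difference lies in the open interval $(-1,1)$, and $\sin(\pi t)$ has the same sign as $t$ on that interval. Substituting $Z(M)=Z(L)+Z(N)$ and expanding, the three imaginary parts $\mathrm{Im}(Z(L)\overline{Z(M)})$, $\mathrm{Im}(Z(M)\overline{Z(N)})$, $\mathrm{Im}(Z(L)\overline{Z(N)})$ coincide, since the self-products $|Z(L)|^2$ and $|Z(N)|^2$ are real. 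Hence the three inequalities in (1) are all equivalent to the single sign condition $\mathrm{Im}(Z(L)\overline{Z(N)})\leqslant 0$. Part (2) follows by symmetry: assuming $\phi(L)\leqslant\phi(N)$, (1) gives $\phi(L)\leqslant\phi(M)\leqslant\phi(N)$ at once.

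For (3), assume $\phi(L)=\phi(M)=\phi(N)=\phi_0$. If $M$ is $Z$-semistable, then any $0\ne U\subseteq L\subseteq M$ satisfies $\phi(U)\leqslant\phi_0$, so $L$ is semistable; for $N$, given $0\ne V\subseteq N$ I would take its preimage $U\subseteq M$, whence $0\to L\to U\to V\to 0$ is exact, and $\phi(U)\leqslant\phi_0=\phi(L)$ combined with (1) forces $\phi(V)\leqslant\phi_0$. Conversely, if $L$ and $N$ are semistable, then for any $0\ne U\subseteq M$ the sequence $0\to U\cap L\to U\to U/(U\cap L)\to 0$ has outer terms embedding into $L$ and into $N$ respectively, hence each has phase at most $\phi_0$, and (2) yields $\phi(U)\leqslant\phi_0$.

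The only delicate point I anticipate is the boundary case when some phase equals $1$, i.e.\ when the corresponding $Z$-value lies on $\mathbb{R}_-$: the trigonometric identity still holds, but one must verify that $\phi(X)-\phi(Y)$ never actually reaches $\pm 1$ (which is true because both phases are strictly positive), so the sign dictionary between $\phi(X)-\phi(Y)$ and $\sin(\pi(\phi(X)-\phi(Y)))$ remains valid. Apart from this bit of bookkeeping, nothing here is deep; the main burden is disciplined use of the homomorphism property of $Z$ together with the geometry of the closed upper half plane.
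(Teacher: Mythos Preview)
Your argument is correct and is the standard one. Note, however, that the paper does not actually supply a proof of this lemma: it simply declares the three lemmas to be ``easy and well known'' and refers the reader to Reineke~\cite{Rei2002}, so there is no paper proof to compare against. Your use of the sign of $\mathrm{Im}\bigl(Z(X)\overline{Z(Y)}\bigr)$ to control the sign of $\phi(X)-\phi(Y)$ is exactly the usual way to handle (1) and (2), and your subobject chase for (3) is the expected one. One tiny piece of bookkeeping you might add in the converse direction of (3): when $U\cap L=0$ or $U/(U\cap L)=0$, part~(2) is not applicable as stated (phases are only defined for nonzero objects), but in either degenerate case $U$ already embeds in $L$ or in $N$ and the conclusion $\phi(U)\leqslant\phi_0$ is immediate.
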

\begin{lemma}\label{stable,indecomposable,simple}
Let $\mathcal{A}$ be an abelian category and $0\neq M\in\mathcal{A}$.
\begin{enumerate}
    \item If $M$ is $Z$-stable, then $M$ is indecomposable.
    \item If $M$ is simple, then $M$ is $Z$-stable.
\end{enumerate}
\end{lemma}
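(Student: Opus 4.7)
The plan is to handle the two parts separately, with both following quickly from the short exact sequence properties established in Lemma \ref{phase in ses}.

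For part (1), I would argue by contradiction. Suppose $M$ is $Z$-stable but decomposable, so there is a direct sum decomposition $M = L \oplus N$ with $L, N \neq 0$. Then the canonical inclusion realizes $L$ as a proper nonzero subobject of $M$, and similarly for $N$, so by $Z$-stability we have strict inequalities $\phi(L) < \phi(M)$ and $\phi(N) < \phi(M)$. On the other hand, the split short exact sequence $0 \to L \to M \to N \to 0$ together with Lemma \ref{phase in ses}(2) gives $\phi(M) \leqslant \max(\phi(L), \phi(N))$. Combining these two facts yields $\phi(M) \leqslant \max(\phi(L),\phi(N)) < \phi(M)$, a contradiction.

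For part (2), the argument is essentially definitional: if $M$ is simple, then $M$ has no nonzero proper subobjects, so the universal statement defining $Z$-stability (namely that $\phi(U) < \phi(M)$ for every $0 \neq U \subsetneqq M$) is vacuously satisfied.

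There isn't really a hard step here; the statement is a formal consequence of the definitions once Lemma \ref{phase in ses} is available. The only subtlety worth flagging is that in part (1) one must be careful that the existence of a direct summand genuinely provides a proper subobject, which requires $N \neq 0$ (so that $L \hookrightarrow M$ is not an isomorphism) --- this is why the decomposability hypothesis demands both summands be nonzero.
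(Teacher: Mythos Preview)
Your proof is correct. The paper does not actually supply a proof of this lemma: it states the result as one of three ``easy and well known'' facts and refers the reader to \cite{Rei2002}. Your argument via Lemma~\ref{phase in ses}(2) for part~(1) and the vacuous verification for part~(2) is exactly the standard justification one would give, so there is nothing to compare against.
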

\begin{lemma}\label{hom to phi big}
    If $\phi(M)>\phi(N)$, then $\Hom_\mathcal{A}(M,N)=0$.
\end{lemma}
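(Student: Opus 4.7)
The plan is to argue by contradiction. Suppose a nonzero morphism $f\colon M\to N$ exists, and factor it through its image as in $\mathcal{A}$: let $K:=\ker(f)$ and $I:=\im(f)$, yielding two short exact sequences
\[
0\to K\to M\to I\to 0,\qquad 0\to I\to N\to \coker(f)\to 0,
\]
with $I\ne 0$ since $f\ne 0$. Following the same convention as in the analogous slope-function lemma just above (the statement is only meaningful when $M$ and $N$ are $Z$-semistable, and I would proceed under that implicit hypothesis, which matches Reineke's setting cited in the paper).

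The first key step is to apply the semistability of $M$ to the subobject $K\subseteq M$, obtaining $\phi(K)\le\phi(M)$ whenever $K\ne 0$; feeding this into Lemma \ref{phase in ses}(1) applied to the first short exact sequence yields $\phi(M)\le\phi(I)$. (If $K=0$, then $M\cong I$ and this inequality is trivial.) The second key step is to apply the semistability of $N$ to the subobject $I\subseteq N$, giving $\phi(I)\le\phi(N)$. Chaining these inequalities produces
\[
\phi(M)\le\phi(I)\le\phi(N),
\]
directly contradicting the assumption $\phi(M)>\phi(N)$.

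There is essentially no obstacle here: everything reduces to reading off phase inequalities from Lemma \ref{phase in ses} on the two canonical exact sequences associated to $f$, together with the defining property of semistability applied to the sub $K\subseteq M$ and the sub $I\subseteq N$. The mildest subtlety is treating the degenerate case $K=0$ (and dually $\coker(f)=0$), but these are trivially absorbed: the argument uses only that $I$ is a nonzero quotient of $M$ and a nonzero subobject of $N$, which holds in all cases once $f\ne 0$.
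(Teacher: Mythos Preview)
Your argument is correct and is precisely the standard one. The paper does not actually supply a proof of this lemma; it merely says the three lemmas (including this one) ``are all easy and well known'' and refers the reader to \cite{Rei2002}, where the argument is exactly the image-factorization you give. Your observation that the statement tacitly assumes $M$ and $N$ are $Z$-semistable is correct and worth making explicit, since the claim is false otherwise; this hypothesis is indeed present in Reineke's formulation and is clearly intended here as well.
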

\begin{example}
    Let $\mathcal{A}=\Coh(\mathbb{P}^1)$ be the category of coherent sheaves over $\mathbb{P}^1$, which is an abelian category. The function
    \[Z(E)=-\deg E+i\rank(E)\]
    defines a stability function $Z$ on $\mathcal{A}$. It is easy to see that $Z(K_0(\mathcal{A}))\not\subseteq\mathbb{H}$ since $Z(k_x)=-1$ for any degree $1$ point $x\in\mathbb{P}^1$.
\end{example}
\subsection{HN Filtrations}
We now introduce the concept of HN filtrations.
\begin{definition}\label{HN filtration}
    Let $\mathcal{A}$ be an abelian category with a stability function $Z$. For $0\neq M\in\mathcal{A}$, we call the filtration
    \[0=M_0\subseteq M_1\subseteq\dots\subseteq M_s=M,\]
    such that $A_i:=M_i/M_{i-1}$ is $Z$-semistable, $1\leqslant i\leqslant s$, with
    \[\phi(A_1)>\phi(A_2)>\dots>\phi(A_s)\]
    the \textbf{Harder-Narisimhan filtration} or \textbf{HN filtration} for short. It is unique up to isomorphism. The tuple of dimension vectors $(\udv A_1,\dots,\udv A_s)$ is called the \textbf{HN type} of $M$.

    The stability function $Z$ is said to \textbf{have the HN property} if any $0\neq M\in\mathcal{A}$ possesses an HN filtration.
\end{definition}
Next, we will mainly consider the finite dimensional module categories over a finite dimensional algebra, or equivalently, finite dimensional representation categories of quivers (with relations). It is easy to see that those categories are all of finite length, and we have the following proposition:
\begin{proposition}
    Let $\mathcal{A}$ be an abelian category of finite length. Then every stability function $Z$ on $\mathcal{A}$ has the HN property.
\end{proposition}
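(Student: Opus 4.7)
My plan is to proceed by induction on the length $\ell(M)$ of $M \in \mathcal{A}$. The base case $\ell(M)=1$ is immediate from Lemma \ref{stable,indecomposable,simple}, since a simple object is $Z$-stable, hence $Z$-semistable, so $0 \subset M$ is already an HN filtration. For the inductive step, if $M$ is $Z$-semistable there is nothing to do; otherwise the strategy is to produce a \textbf{maximal destabilising subobject} $M_1 \subseteq M$ which is itself $Z$-semistable, satisfies $\phi(M_1) > \phi(M/M_1)$, and whose quotient $M/M_1$ has strictly smaller length, so that the induction hypothesis supplies an HN filtration of $M/M_1$ that lifts along the projection $\pi \colon M \twoheadrightarrow M/M_1$ to give an HN filtration of $M$.

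The heart of the argument is therefore the construction of $M_1$. First, I would observe that since $\mathcal{A}$ is of finite length, Jordan--Hölder implies that the class $[U] \in K_0(\mathcal{A})$ of any subobject $U \subseteq M$ is constrained to lie among finitely many possibilities (each simple appears at most as many times as in $M$). Consequently the set of phases $\Phi_M = \{\phi(U) : 0 \neq U \subseteq M\}$ is finite, so admits a maximum $\phi_{\max}$. I would let $M_1$ be a subobject achieving $\phi_{\max}$ that is maximal with respect to inclusion among such subobjects; its $Z$-semistability follows immediately because any nonzero subobject of $M_1$ is a subobject of $M$ and hence has phase $\leqslant \phi_{\max}$.

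The main technical step is to verify that $M_1$ is actually unique, so that the filtration obtained later is well-defined, and to check the strict phase inequality $\phi(M_1) > \phi(M/M_1)$. For uniqueness, given two subobjects $A, B \subseteq M$ both of phase $\phi_{\max}$, I would apply Lemma \ref{phase in ses} to the short exact sequence $0 \to B \to A+B \to A/(A \cap B) \to 0$; since $\phi(A \cap B) \leqslant \phi_{\max} = \phi(A)$, the lemma forces $\phi(A/(A\cap B)) \geqslant \phi_{\max}$, and then a second application yields $\phi(A+B) \geqslant \phi_{\max}$. Combined with maximality of $\phi_{\max}$ in $\Phi_M$ and maximality of $A$ by inclusion, this forces $B \subseteq A$ (and symmetrically $A \subseteq B$). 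For the quotient inequality, a similar argument shows that if some $V' \subseteq M$ strictly containing $M_1$ satisfied $\phi(V'/M_1) \geqslant \phi_{\max}$, then Lemma \ref{phase in ses} applied to $0 \to M_1 \to V' \to V'/M_1 \to 0$ would give $\phi(V') \geqslant \phi_{\max}$, contradicting the maximality of $M_1$.

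I expect the uniqueness/maximal-destabilising-subobject step to be the main obstacle, because it is the only place where the combinatorial geometry of the upper half-plane, the finite-length hypothesis, and the basic inequalities from Lemma \ref{phase in ses} all need to interact; once $M_1$ is established the rest of the proof reduces to the induction hypothesis applied to $M/M_1$ and a routine pullback of the filtration along $\pi$, giving $0 = M_0 \subset M_1 \subset \pi^{-1}(N_1) \subset \cdots \subset \pi^{-1}(N_t) = M$ with semistable successive quotients of strictly decreasing phases, as required by Definition \ref{HN filtration}.
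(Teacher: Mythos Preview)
Your argument is correct and follows the standard route to existence of HN filtrations in finite-length categories: extract the maximal destabilising subobject using the finiteness of the set of possible phases (forced by Jordan--H\"older) together with the Noetherian property, then induct on length. The paper, by contrast, does not give a direct proof at all---it simply invokes \cite[Proposition~2.4]{Bri2002}, where Bridgeland shows that the HN property follows from two chain conditions (no infinite sequences $M_1 \subset M_2 \subset \cdots$ with $\phi(M_{i+1}) > \phi(M_i)$, and a dual condition on quotients), both of which are trivially satisfied when every object has finite length. Your argument is essentially the content of Bridgeland's proof specialised to the finite-length setting, so it is more self-contained but not conceptually new; the paper's citation is terser but opaque without access to \cite{Bri2002}.

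Two minor remarks. First, the uniqueness of $M_1$ is not actually needed for \emph{existence} of an HN filtration, which is all that Definition~\ref{HN filtration} asks you to establish (uniqueness of the filtration itself is asserted separately there and is a distinct statement); so you could drop that paragraph without loss. Second, for the final gluing step you need not merely $\phi(M_1) > \phi(M/M_1)$ but rather $\phi(M_1) > \phi(N_1)$, where $N_1$ is the first step of the HN filtration of $M/M_1$. Your argument already delivers this---take $V' = \pi^{-1}(N_1)$ in the contradiction you set up---but when writing it out, make that choice of $V'$ explicit rather than leaving it to the reader.
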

\begin{proof}
    It is a natural consequence of \cite[Proposition 2.4]{Bri2002}.
\end{proof}
\begin{example}\label{A2}
    Let $\mathcal{A}=\rep_k(A_2)$, where $A_2:1\rightarrow 2$. Then $\mathcal{A}$ has three indecomposable objects:
    \[S_1:k\rightarrow0,S_2:0\rightarrow k, P_1:k\rightarrow k\]
    up to isomorphism, with a short exact sequence
    \[0\rightarrow S_2\rightarrow P_1\rightarrow S_1\rightarrow0.\]
    The Grothendieck group $K_0(\mathcal{A})=\mathbb{Z}^2$ is the free abelian group generated by $\udv S_1=(1,0)$ and $\udv S_2=(0,1)$. So a stability function on $\mathcal{A}$ is uniquely determined by an assignment for $Z(S_1)$ and $Z(S_2)$. 
    
    By \textbf{Lemma \ref{stable,indecomposable,simple}}(2), $S_1$ and $S_2$ are always $Z$-stable, hence $Z$-semistable, no matter how $Z$ change. But for $P_1$, the question is a little more difficult. The only nonzero proper subobject (up to isomorphism) of $P_1$ is $S_2$. So $P_1$ is $Z$-semistable if and only if $\phi(S_2)\leqslant \phi(P_1)$ and hence if and only if $\phi(S_2)\leqslant \phi(S_1)$ by \textbf{Lemma \ref{phase in ses}}(1). So the HN filtration of $P_1$ is $0\subseteq P_1$ when $\phi(S_2)\leqslant\phi(S_1)$ and $0\subseteq S_2\subseteq P_1$ when $\phi(S_2)>\phi(S_1)$.
\end{example}
In the case of \textbf{Example 
\ref{A2}}, we observe that when the stability function (Z) changes, differences emerge on the two ``sides" of the ``wall" $\phi(S_2)=\phi(S_1)$. This phenomenon is also present in general cases. However, during the change of the stability function, certain properties remain invariant. Our goal is to characterize some of them.
\section{Ringel-Hall Algebras and Wall-Crossing Formulas}
\subsection{Ringel-Hall Algebras}
For an abelian category $\mathcal{A}$, there are many ways to define its Hall
algebra. Here we follow the choice of \cite{BT2012} and \cite{Rei2002}.

Let $k=\mathbb{C}$ and $\mathcal{H}(\mathcal{A})$ be the $\mathbb{Q}$-vector space with basis the set of isomorphism classes of objects in $\mathcal{A}$. Now we define a multiplication on it to make it an associative $\mathbb{Q}$-algebra.

For $M_1,\dots,M_r,X\in\mathcal{A}$, the set of filtrations
\[V(M_1,\dots,M_r;X)=\{0=X_0\subseteq X_1\subseteq\dots\subseteq X_r=X|X_i/X_{i-1}\cong A_i,1\leqslant i\leqslant r\}\]
has the structure of a complex variety. We denote its Euler characteristic number (under the analytic topology) by $\chi(V(M_1,\dots,M_r;X))$. The following theorem was first proved by Schofield and a detailed study of it by Reidtmann can be found in \cite{Rie1994}.
\begin{theorem}
    The operation
    \[u_{[M]}*u_{[N]}=\sum_{[L]}\chi(V(M,N;L))u_{[L]}\]
    defines an associative multiplication on $\mathcal{H}(\mathcal{A})$ with the identity element $1=u_{[0]}$. The resulting associative $\mathbb{Q}$-algebra $(\mathcal{H}(\mathcal{A}),*,1)$ is called the \textbf{Ringel-Hall algebra} of $\mathcal{A}$.
\end{theorem}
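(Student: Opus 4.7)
The plan is to verify the unit axiom separately and to reduce associativity to a symmetric Euler-characteristic identity involving three-step filtration varieties. The unit axiom is immediate: $V(0,N;L)$ consists of a single filtration when $N\cong L$ and is empty otherwise, so $\chi(V(0,N;L))=\delta_{[N],[L]}$, and symmetrically for $V(N,0;L)$, so $u_{[0]}$ is a two-sided identity. For associativity, expanding both bracketings of $u_{[A]}*u_{[B]}*u_{[C]}$ and comparing coefficients of $u_{[L]}$ shows that it suffices to prove the double identity
\[\sum_{[M]}\chi(V(A,B;M))\,\chi(V(M,C;L))\ =\ \chi(V(A,B,C;L))\ =\ \sum_{[N]}\chi(V(A,N;L))\,\chi(V(B,C;N)),\]
where
\[V(A,B,C;L)=\{0\subseteq X_1\subseteq X_2\subseteq L\mid X_1\cong A,\ X_2/X_1\cong B,\ L/X_2\cong C\}\]
is the three-step filtration variety.

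The geometric input consists of two natural forgetful morphisms. The first, $\pi_{12}$, sends $(X_1\subseteq X_2\subseteq L)$ to $(X_2\subseteq L)$, which lies in $V(M,C;L)$ for $M=X_2$; its fiber over a point with $X_2\cong M$ is canonically $V(A,B;X_2)\cong V(A,B;M)$. The second, $\pi_{23}$, sends $(X_1\subseteq X_2\subseteq L)$ to $(X_1\subseteq L)$, which lies in $V(A,N;L)$ for $N=L/X_1$; its fiber over a point with $L/X_1\cong N$ is canonically $V(B,C;L/X_1)\cong V(B,C;N)$, parametrizing subobjects $Y=X_2/X_1\subseteq N$ with $Y\cong B$ and $N/Y\cong C$.

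Invoking the multiplicativity of the topological Euler characteristic along a stratification whose fibers are mutually isomorphic yields both displayed identities simultaneously, completing the proof of associativity. The principal obstacle is precisely this geometric multiplicativity step: one must equip each filtration variety with a compatible complex-algebraic structure, verify that the strata indexed by $[M]$ in $\bigsqcup_{[M]}V(M,C;L)$ (and symmetrically by $[N]$) are constructible, and check that the fibers of $\pi_{12}$ and $\pi_{23}$ vary along each stratum in a locally trivial way so that $\chi$ factors through the projection. This geometric analysis is the substance of Riedtmann's work in \cite{Rie1994}; once it is granted, the remainder of the argument is a routine bookkeeping exercise with generating series.
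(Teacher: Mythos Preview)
Your argument is correct and is precisely the standard proof. The paper itself does not prove this theorem at all: it merely attributes the result to Schofield and refers to Riedtmann \cite{Rie1994} for a detailed treatment, so your sketch is in fact more explicit than anything appearing in the paper, and it faithfully summarizes the content of the cited reference (three-step filtration variety, the two forgetful projections, and Euler-characteristic multiplicativity along constructible stratifications).
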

We have another point of view on the elements in $\mathcal{H}(\mathcal{A})$. For $f=\sum_{[M]}f_{[M]}u_{[M]}\in\mathcal{H}(\mathcal{A})$, we may regard $f$ as a function from the set of isomorphism classes of objects in $\mathcal{A}$ to the complex field via
\[f([M]):=f_{[M]}.\]
So $\mathcal{H}(\mathcal{A})$ can be regarded as the set of functions on isomorphism classes of objects in $\mathcal{A}$ that vanish on all but finitely many points.
It is the viewpoint taken by Bridgeland and Toledano Laredo \cite{BT2012}, and we will maintain it throughout this section.

\subsection{Characteristic Functions}
In $\mathcal{H}(\mathcal{A})$, we define some important functions, using the same notation as in \cite{BT2012} and differing from those in \cite{Rei2002} and \cite{WC2023}.

Let us fix a stability function on $\mathcal{A}$. For $\gamma\in K_0(\mathcal{A})$, we define
\begin{align*}
    \kappa_\gamma(M)&=\begin{cases}
        1,&\udv M=\gamma,\\
        0,&\udv M\neq\gamma,
    \end{cases},\\
    \delta^Z_\gamma(M)&=\begin{cases}
        1,&\udv M=\gamma\text{ and $M$ is $Z$-semistable},\\
        0,&\text{otherwise}.
    \end{cases}
\end{align*}
These can be realized as characteristic functions supported on all objects and $Z$-semistable objects, respectively, with dimension vector $\gamma$. By \cite{Rei2002}, we have the following result.
\begin{theorem}\label{reciprocity}
For any $0\neq\gamma\in K_0(\mathcal{A})$, we have
    \begin{align*}
    \kappa_\gamma&=\sum_{\begin{subarray}{c}
        \\ \gamma_1+\dots+\gamma_n=\gamma\\
        \phi(\gamma_1)>\dots>\phi(\gamma_n)
        \end{subarray}}\delta^Z_{\gamma_1}*\dots*\delta^Z_{\gamma_n},\\
    \delta^Z_\gamma&=\sum_{\begin{subarray}{c}
        \\ \gamma_1+\dots+\gamma_n=\gamma\\
        \phi(\gamma_1+\dots+\gamma_i)>\phi(\gamma),1\leqslant i\leqslant n-1
        \end{subarray}}(-1)^{n-1}\kappa_{\gamma_1}*\dots*\kappa_{\gamma_n}.
    \end{align*}
    The second identity is called the \textbf{Reineke inversion}.
\end{theorem}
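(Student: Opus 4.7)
The proof naturally divides into two parts, corresponding to the two identities.

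For the first identity, the plan is to unwind the definition of the Hall product at an arbitrary object $M$ with $\udv M=\gamma$. By the multiplication rule and the support condition on $\delta^Z_{\gamma_i}$, the value $(\delta^Z_{\gamma_1} * \cdots * \delta^Z_{\gamma_n})(M)$ equals the Euler characteristic of the locally closed subvariety of $V(\gamma_1,\ldots,\gamma_n;M)$ consisting of filtrations whose successive quotients are $Z$-semistable of dimensions $\gamma_1,\ldots,\gamma_n$. Under the hypothesis $\phi(\gamma_1) > \cdots > \phi(\gamma_n)$, such a filtration is by definition an HN filtration of $M$, so by the uniqueness clause in Definition \ref{HN filtration} this subvariety is either empty or a single reduced point. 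Summing over all decompositions with strictly decreasing phases and invoking existence of HN filtrations in the finite-length setting therefore returns $1$ precisely when $\udv M = \gamma$ and $0$ otherwise, recovering $\kappa_\gamma$.

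For the Reineke inversion, I would isolate the $n = 1$ term in the first identity to obtain
\begin{equation*}
\delta^Z_\gamma \;=\; \kappa_\gamma \;-\; \sum_{\substack{n \geq 2,\ \gamma_1 + \cdots + \gamma_n = \gamma \\ \phi(\gamma_1) > \cdots > \phi(\gamma_n)}} \delta^Z_{\gamma_1} * \cdots * \delta^Z_{\gamma_n},
\end{equation*}
and then induct on the total dimension of $\gamma$, substituting the Reineke inversion (assumed known for smaller dimension vectors) for each $\delta^Z_{\gamma_i}$ appearing on the right. The recursion terminates because each substitution strictly decreases the dimensions involved. After expanding and regrouping, every ordered decomposition $\gamma = \beta_1 + \cdots + \beta_m$ that survives contributes a single monomial $\kappa_{\beta_1} * \cdots * \kappa_{\beta_m}$ with coefficient $(-1)^{m-1}$, and the surviving decompositions are precisely those whose partial sums satisfy $\phi(\beta_1 + \cdots + \beta_i) > \phi(\gamma)$ for $1 \leq i \leq m-1$.

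The main obstacle will be the combinatorial bookkeeping in the second identity: one must verify that after nested substitutions, each ordered decomposition is produced with multiplicity exactly one and with the predicted sign, and that the various block-level inequalities $\phi(\gamma_1) > \cdots > \phi(\gamma_n)$ appearing at each recursive level collapse cleanly into the single cumulative condition on partial sums. Repeated application of Lemma \ref{phase in ses}, which relates the phases of subobject, object, and quotient in a short exact sequence, is what bridges the block-wise and cumulative formulations, and is the key tool for carrying out the induction.
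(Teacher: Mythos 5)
Your handling of the first identity is correct and is essentially the paper's own argument: the paper dismisses it with the remark that it follows from uniqueness of the HN filtration, and your unwinding of the Hall product just makes that precise. One small point worth making explicit: you need uniqueness of the HN filtration as a chain of subobjects of the fixed object $M$ (not merely up to isomorphism, which is how Definition \ref{HN filtration} phrases it); this is standard (the terms are canonical, e.g.\ $M_1$ is the maximal destabilizing subobject), and it is what makes the relevant stratum of $V(\gamma_1,\dots,\gamma_n;M)$ a single point of Euler characteristic $1$, so that together with existence in the finite-length setting the sum evaluates to $\kappa_\gamma$.

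For the Reineke inversion, however, there is a genuine gap. The paper gives no proof either (it cites \cite{Rei2002}), but your sketch defers exactly the step that constitutes the theorem. After isolating the $n=1$ term and substituting the induction hypothesis into $\delta^Z_\gamma=\kappa_\gamma-\sum_{n\geqslant 2}\delta^Z_{\gamma_1}*\dots*\delta^Z_{\gamma_n}$, a fixed ordered decomposition $\gamma=\beta_1+\dots+\beta_m$ does \emph{not} appear ``with multiplicity exactly one'': it is produced once for every way of cutting $(\beta_1,\dots,\beta_m)$ into $n\geqslant 2$ consecutive blocks whose block sums have strictly decreasing phases and which each satisfy the partial-sum condition internally, with sign $-\prod_j(-1)^{|B_j|-1}$. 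What must be proved is that these many signed contributions cancel so that the net coefficient is $(-1)^{m-1}$ when $\phi(\beta_1+\dots+\beta_i)>\phi(\gamma)$ for $1\leqslant i\leqslant m-1$ and $0$ otherwise. That cancellation is precisely Reineke's combinatorial inversion lemma; it requires an actual argument (a telescoping sum over cut sets, or a sign-reversing involution, as in \cite{Rei2002}), and without it the induction does not close. Note also that the tool bridging block-level and cumulative phase conditions is not Lemma \ref{phase in ses}, which is a statement about subobjects and quotients of objects: the inversion is an identity among the $\kappa$'s indexed purely by classes in $K_0(\mathcal{A})$, and what you actually need is the numerical see-saw property of $Z$ on the upper half-plane, namely that for nonzero classes $a,b$ in the positive cone $\phi(a+b)$ lies weakly between $\phi(a)$ and $\phi(b)$.
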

    The proof of the first identity follows immediately from the uniqueness of the HN filtration. The second was proved by Reineke in \cite{Rei2002}.   
    
The following subalgebra of $\mathcal{H}(\mathcal{A})$ is very important.
\begin{definition}
    The subalgebra
    \[C(\mathcal{A})=\linp{\kappa_\gamma\middle|\gamma\in K_0(\mathcal{A})}\]
    of $\mathcal{H}(\mathcal{A})$ is called the \textbf{composition subalgebra} of $\mathcal{H}(\mathcal{A})$.
\end{definition}
We can then deduce from \textbf{Theorem \ref{reciprocity}} that:
\begin{corollary}[{\cite[Theorem 4.9]{Rei2002}}]
    For any stability function $Z$ on $\mathcal{A}$ and $\gamma\in K_0(\mathcal{A})$, we have $\delta^Z_\gamma\in C(\mathcal{A})$.
\end{corollary}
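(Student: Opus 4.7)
The plan is to deduce the corollary directly from the Reineke inversion formula, which is the second identity in \textbf{Theorem \ref{reciprocity}}. That identity already expresses $\delta^Z_\gamma$ as an explicit $\mathbb{Q}$-linear combination of products $\kappa_{\gamma_1}*\cdots*\kappa_{\gamma_n}$, which are by definition elements of the composition subalgebra $C(\mathcal{A})$. So the task reduces to checking that this expression is a \emph{finite} sum, and hence represents an honest element of $C(\mathcal{A})$ (rather than merely a formal series).

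First I would fix $0\neq\gamma\in K_0(\mathcal{A})$ and write down the Reineke inversion
\[\delta^Z_\gamma=\sum_{\begin{subarray}{c}\gamma_1+\dots+\gamma_n=\gamma\\ \phi(\gamma_1+\dots+\gamma_i)>\phi(\gamma),\,1\leqslant i\leqslant n-1\end{subarray}}(-1)^{n-1}\kappa_{\gamma_1}*\dots*\kappa_{\gamma_n},\]
where every $\gamma_j$ is required to be the dimension vector of a nonzero object, i.e.\ $\gamma_j\in\mathbb{N}Q_0\setminus\{0\}$. Next I would verify the finiteness: since $\gamma\in\mathbb{N}Q_0$ is a fixed tuple of nonnegative integers and each $\gamma_j$ is a nonzero tuple of nonnegative integers summing to $\gamma$, the total number of ordered decompositions $\gamma=\gamma_1+\dots+\gamma_n$ is bounded (coordinatewise, each $\gamma_j$ lies in the finite box determined by $\gamma$, and $n$ is bounded by $\sum_i\gamma(i)$). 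Hence only finitely many tuples contribute, whether or not they also satisfy the phase inequalities. The case $\gamma=0$ is trivial since $\delta^Z_0=\kappa_0=1\in C(\mathcal{A})$.

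Finally I would conclude: each summand $(-1)^{n-1}\kappa_{\gamma_1}*\dots*\kappa_{\gamma_n}$ lies in $C(\mathcal{A})$ because $C(\mathcal{A})$ is, by definition, the subalgebra of $\mathcal{H}(\mathcal{A})$ generated by the $\kappa_\gamma$'s, and hence it is closed under $*$ and $\mathbb{Q}$-linear combinations. A finite sum of elements of $C(\mathcal{A})$ lies in $C(\mathcal{A})$, so $\delta^Z_\gamma\in C(\mathcal{A})$.

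The only real content is the finiteness of the indexing set, which is essentially combinatorial and poses no serious obstacle once one unpacks that $\gamma_1,\dots,\gamma_n$ are constrained to be nonzero nonnegative dimension vectors with prescribed sum; no hard part is expected. The substance of the result lives entirely in \textbf{Theorem \ref{reciprocity}}, and the corollary is merely the algebraic repackaging that Reineke inversion makes possible.
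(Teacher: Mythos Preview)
Your proof is correct and takes exactly the same approach as the paper, which simply states the corollary as an immediate consequence of the Reineke inversion in \textbf{Theorem \ref{reciprocity}}; your explicit check that the sum is finite is the only detail one might want to add. (A tiny quibble: for $\gamma=0$ one has $\delta^Z_0=0$ rather than $1$, since the zero object is by convention not semistable, but $0\in C(\mathcal{A})$ regardless.)
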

\subsection{Comultiplication}
The importance of the composition algebras is that it has an structure of bialgebra and hence the set of primitive elements forms a Lie subalgebra.

Let us to introduce it for several steps, follow \cite{BT2012}. We first define a map $\Delta:\mathcal{H}(\mathcal{A})\rightarrow\mathcal{H}(\mathcal{A}\times\mathcal{A})$ via
\[\Delta(f)(M,N)=f(M\oplus N).\]
We have the following important result.
\begin{theorem}[{\cite[Theorem 4.3]{BT2012}}]
\begin{enumerate}
    \item The image of $C(\mathcal{A})$ under $\Delta$ is contained in the tensor product
    \[C(\mathcal{A})\otimes C(\mathcal{A})\subseteq \mathcal{H}(\mathcal{A})\otimes\mathcal{H}(\mathcal{A})\subseteq\mathcal{H}(\mathcal{A\times A}),\]
    where the second inclusion is understood as
    \[(f\otimes g)(M,N)=f(M)g(N).\]
    \item The restriction of $\Delta$ on $C(\mathcal{A})$ can be explicitly written as
    \[\Delta(\kappa_\gamma)=\sum_{\gamma_1+\gamma_2=\gamma}\kappa_{\gamma_1}\otimes\kappa_{\gamma_2.}\]
    \item Define $\eta(f)=f(0)$, then $(C(\mathcal{A}),*,1,\Delta,\eta)$ is a cocommutative bialgebra.
\end{enumerate}
\end{theorem}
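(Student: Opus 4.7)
The plan is to handle parts (2) and (1) by direct computation, address the purely formal content of (3) with the formula from (2), and reserve the real work for verifying that $\Delta$ is an algebra map on $C(\mathcal{A})$.

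For (2), evaluating both sides at an arbitrary pair $(M,N)$ works cleanly: $\Delta(\kappa_\gamma)(M,N) = \kappa_\gamma(M\oplus N)$ is the indicator of $\udv M + \udv N = \gamma$, using $\udv(M\oplus N) = \udv M + \udv N$, while on the right only the term with $\gamma_1 = \udv M$ and $\gamma_2 = \udv N$ can contribute and does so exactly under the same condition. Part (1) then follows at once because $C(\mathcal{A})$ is the $\mathbb{Q}$-span of the $\kappa_\gamma$'s. For (3), coassociativity and cocommutativity of $\Delta$ on $C(\mathcal{A})$ are immediate from the symmetric associative formula: both sides of the coassociativity axiom unfold to $\sum_{\gamma_1+\gamma_2+\gamma_3=\gamma}\kappa_{\gamma_1}\otimes\kappa_{\gamma_2}\otimes\kappa_{\gamma_3}$, and the flip acts trivially after relabelling. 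The counit axiom for $\eta$ follows from $\kappa_{\gamma'}(0) = \delta_{\gamma',0}$, and $\eta$ is an algebra map because $V(M,N;0)$ is a single point when $M=N=0$ and empty otherwise, giving $(f*g)(0) = f(0)g(0)$.

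The essential content is showing that $\Delta$ respects the Hall multiplication. Since both structure maps are $\mathbb{Q}$-linear and $C(\mathcal{A})$ is generated by the $\kappa_\gamma$'s, it suffices to prove $\Delta(\kappa_\alpha * \kappa_\beta) = \Delta(\kappa_\alpha) * \Delta(\kappa_\beta)$, and after evaluating at $(M,N)$ this reduces to the Euler-characteristic identity
\[\chi\bigl(V(\alpha,\beta;M\oplus N)\bigr) = \sum_{\substack{\alpha_1+\alpha_2=\alpha \\ \beta_1+\beta_2=\beta}}\chi\bigl(V(\alpha_1,\beta_1;M)\bigr)\,\chi\bigl(V(\alpha_2,\beta_2;N)\bigr),\]
where $V(\alpha,\beta;L)$ denotes the variety of subobjects $X\subseteq L$ with $\udv X = \alpha$ and $\udv(L/X) = \beta$. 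My plan is to stratify $V(\alpha,\beta;M\oplus N)$ by the locally constant invariants $\alpha_1 := \udv(X\cap M)$ and $\alpha_2 := \udv\pi_N(X) = \alpha - \alpha_1$, and to push forward via $X\mapsto(X\cap M,\pi_N(X))$ onto $V(\alpha_1,\beta_1;M)\times V(\alpha_2,\beta_2;N)$. The fibre over a pair $(Y,Z)$ consists of subobjects $X\subseteq \pi_N^{-1}(Z) = M\oplus Z$ satisfying $X\cap M = Y$ and $\pi_N(X) = Z$; any such $X$ is the image of a section $Z\to M\oplus Z$ summed with $Y$, and two sections give the same $X$ iff they agree modulo $Y$, so the fibre is naturally a torsor over $\Hom_\mathcal{A}(Z,M/Y)$. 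Since the Euler characteristic of an affine space is $1$ and $\chi$ is additive on algebraic stratifications, the identity follows.

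The main obstacle is making the last step fully rigorous: one must verify that each stratum is a constructible subset of the ambient quiver Grassmannian and that the fibration by affine-space torsors is algebraic enough for the multiplicativity of $\chi$ to apply. Once this Euler-characteristic identity is in hand the bialgebra axioms combine to finish (3), and the remainder of the theorem is a routine consequence of (2).
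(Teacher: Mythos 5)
The paper does not prove this statement at all --- it is quoted verbatim from \cite[Theorem 4.3]{BT2012} --- so your argument is an independent one rather than a variant of the paper's. Your verification of (2), and your key Euler-characteristic identity together with the stratification of $V(\alpha,\beta;M\oplus N)$ by $X\mapsto (X\cap M,\pi_N(X))$, are the standard and correct mechanism (this is the familiar direct-sum formula for quiver Grassmannians, and the constructibility issues you flag are routine over $\mathbb{C}$). However, there is a genuine gap in your reduction. In this paper $C(\mathcal{A})$ is the subalgebra \emph{generated} by the $\kappa_\gamma$, not their $\mathbb{Q}$-span --- indeed the Corollary that $\delta^Z_\gamma\in C(\mathcal{A})$ via Reineke inversion only makes sense for the generated subalgebra --- so (1) does not ``follow at once'' from (2). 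For the same reason, proving $\Delta(\kappa_\alpha*\kappa_\beta)=\Delta(\kappa_\alpha)*\Delta(\kappa_\beta)$ is not enough for (1) or for the algebra-map half of (3): a general element of $C(\mathcal{A})$ is a combination of words $\kappa_{\gamma_1}*\cdots*\kappa_{\gamma_n}$, and you cannot iterate the two-letter case because the intermediate factor $\kappa_{\gamma_1}*\cdots*\kappa_{\gamma_{n-1}}$ is no longer a single $\kappa$, while multiplicativity of $\Delta$ on arbitrary elements of $\mathcal{H}(\mathcal{A})$ has not been established (and your affine-fibre argument does not give it: once the isomorphism class of the subobject is fixed, the fibre is only a constructible subset of $\Hom_{\mathcal{A}}(Z,M/Y)$ cut out by extension-class conditions, so the ``$\chi=1$'' step fails there). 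The fix is to prove directly the word identity $\Delta(\kappa_{\gamma_1}*\cdots*\kappa_{\gamma_n})=\Delta(\kappa_{\gamma_1})*\cdots*\Delta(\kappa_{\gamma_n})$: note that $(\kappa_{\gamma_1}*\cdots*\kappa_{\gamma_n})(L)$ is the Euler characteristic of the variety of flags in $L$ with prescribed subquotient dimension vectors, stratify flags $X_\bullet\subseteq M\oplus N$ by the pair $(X_\bullet\cap M,\pi_N(X_\bullet))$, and check that the fibre is the vector space of tuples $f_i\in\Hom_{\mathcal{A}}(Z_i,M/Y_i)$ subject to linear compatibility conditions, hence again has $\chi=1$.

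Two smaller points. First, your description of the fibre is inaccurate as stated: not every $X$ with $X\cap M=Y$ and $\pi_N(X)=Z$ is of the form $Y$ plus the image of a section $Z\to M\oplus Z$, since the extension $0\to Y\to X\to Z\to 0$ need not split (take $M=P_1$, $Y=S_2$, $Z=N=S_1$ in $\rep_k A_2$: the nonsplit $X\cong P_1$ occurs but $\Hom(S_1,P_1)=0$). The correct statement is that $X/Y\subseteq (M\oplus Z)/Y=M/Y\oplus Z$ is the graph of a morphism $Z\to M/Y$, which gives the bijection of the fibre with $\Hom_{\mathcal{A}}(Z,M/Y)$ and leaves your conclusion intact. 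Second, coassociativity and cocommutativity need no computation with the formula in (2): they hold on all of $\mathcal{H}(\mathcal{A})$ because $\Delta(f)(M,N)=f(M\oplus N)$ is manifestly symmetric and $((\Delta\otimes\id)\Delta f)(L,M,N)=f(L\oplus M\oplus N)=((\id\otimes\Delta)\Delta f)(L,M,N)$. With the word identity supplied, your outline does yield the theorem by an argument that is presumably close in spirit to the one in \cite{BT2012}, which this paper simply cites.
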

We say that an element $p\in C(\mathcal{A})$ is \textbf{primitive} if $\Delta(p)=p\otimes1+1\otimes p$. The following lemma shows the relationship between primitive elements in $C(\mathcal{A})$ and indecomposable objects in $\mathcal{A}$.
\begin{lemma}[\cite{Rin1992}]\label{primitive}
    Let $\mathcal{S}$ be a Krull-Schimidt category and $C(\mathcal{S})$ be the vector space with basis the isomorphism classes of objects in $\mathcal{S}$. Let $\Delta$ be a comultiplication defined as above, then an element $p\in C(\mathcal{S})$ is primitive if and only if $p$ is supported on indecomposable objects in $\mathcal{S}$.
\end{lemma}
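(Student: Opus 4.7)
The plan is to convert the coalgebraic condition $\Delta(p)=p\otimes 1+1\otimes p$ into a pointwise statement about the coefficients of $p$, and then to read it off against the Krull--Schmidt decomposition.

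Write $p=\sum_{[X]}p_{[X]}u_{[X]}$, and recall that under the function-theoretic viewpoint the unit is $1=u_{[0]}$, i.e.\ $1(X)=1$ if $X\cong 0$ and $0$ otherwise. First I would evaluate both sides of the primitivity equation at an arbitrary pair $(M,N)$. Using the definition $\Delta(f)(M,N)=f(M\oplus N)$ and the tensor-product convention $(f\otimes g)(M,N)=f(M)g(N)$, one gets
\[
\Delta(p)(M,N)=p(M\oplus N),\qquad (p\otimes 1+1\otimes p)(M,N)=p(M)\cdot 1(N)+1(M)\cdot p(N).
\]
Splitting into the four cases $M\cong 0$ or not, $N\cong 0$ or not, the two trivial cases $(M\cong 0,N\not\cong 0)$ and $(M\not\cong 0,N\cong 0)$ give identities automatically. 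The remaining two cases show that primitivity of $p$ is equivalent to the two pointwise conditions
\[
p(0)=0,\qquad p(M\oplus N)=0\ \text{for all } M\not\cong 0 \text{ and } N\not\cong 0.
\]

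Next I would use Krull--Schmidt to identify this with the support condition. Any $X\in\mathcal{S}$ admits an essentially unique decomposition $X\cong X_1\oplus\cdots\oplus X_r$ into indecomposables, and exactly one of three things happens: $r=0$ (so $X\cong 0$); $r=1$ (so $X$ is indecomposable); or $r\geqslant 2$, in which case $X\cong X_1\oplus(X_2\oplus\cdots\oplus X_r)$ is a direct sum of two nonzero objects. Thus the two conditions above are equivalent to $p_{[X]}=0$ whenever $X$ is not indecomposable, which is exactly the asserted support condition. Conversely, if $p$ is supported on indecomposables then neither $0$ nor any $M\oplus N$ with both summands nonzero lies in the support, so the two conditions are trivially met and $p$ is primitive.

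There is no real obstacle here; the result is a bookkeeping exercise once the primitivity condition is unpacked pointwise. The only minor subtlety is the convention that the zero object is not regarded as indecomposable, which is precisely what is enforced by the counit $\eta(f)=f(0)$ and which makes the equivalence go through cleanly.
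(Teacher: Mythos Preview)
Your argument is correct. The paper does not supply its own proof of this lemma; it simply records the statement and attributes it to Ringel \cite{Rin1992}. Your pointwise unpacking of the primitivity condition $\Delta(p)=p\otimes1+1\otimes p$ via the four cases on whether $M$ and $N$ are zero, followed by the appeal to the Krull--Schmidt decomposition to identify ``not a sum of two nonzero objects and not zero'' with ``indecomposable'', is exactly the standard direct argument and needs no further input. The only point worth noting explicitly is the case $M\cong0$, $N\cong0$, which you list among the ``remaining two cases'' but do not display: there the identity reads $p(0)=2p(0)$ and forces $p(0)=0$, which is precisely the counit condition $\eta(p)=0$ that excludes the zero object from the support. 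You handle this correctly, but a reader might appreciate seeing it spelled out rather than inferred.
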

Let $\mathfrak{n}(\mathcal{A})$ be the subset of $C(\mathcal{A})$ that consists of all primitive elements. It is a Lie subalgebra of $C(\mathcal{A})$ by \cite{Rie1994} and \cite{Rin1992}, called the \textbf{Ringel-Hall Lie algebra}.

\subsection{Wall-crossing Formulas}
Let $\hat{C}(\mathcal{A})$ and $\hat{\mathfrak{n}}(\mathcal{A})$ be the completion of $C(\mathcal{A})$ and $\mathfrak{n}(\mathcal{A})$, respectively, in the meaning of \cite{BT2012}. Denote by $\hat{N}(\mathcal{A})$ the subset of $\hat{C}(\mathcal{A})$ that consists of all \textbf{grouplike elements}, namely the elements $g$ satisfying $\Delta(g)=g\otimes g$. Then the exponential map
\[\exp x:=\sum_{n\geqslant0}\frac{x^n}{n!}\]
gives a bijection from $\hat{\mathfrak{n}}(\mathcal{A})$ to $\hat{N}(\mathcal{A})$, with the inverse
\[\log y=\sum_{n\geqslant1}\frac{(-1)^{n-1}}{n}(y-1)^n.\]
$\hat{N}(\mathcal{A})$ is a subgroup of the unit group $\hat{C}(\mathcal{A})^\times$ of $\hat{C}(\mathcal{A})$, called the \textbf{Ringel-Hall group} of $\mathcal{A}$. It is a pro-unipotent Lie group with Lie algebra $\hat{\mathfrak{n}}(\mathcal{A})$.

We now define the following elements in $\hat{C}(\mathcal{A})$. For $\phi\in (0,1]$, set
\[\Ss^Z_\phi=1+\sum_{\gamma\in K_0(\mathcal{A}), \phi(\gamma)=\phi}\delta^Z_\gamma.\]
Then we can write the famous wall-crossing formula.
\begin{theorem}[\cite{Rei2002}]
    Let $1_\mathcal{A}$ be the element
    \[1_\mathcal{A}=\sum_{\gamma\in K_0(\mathcal{A})}\kappa_\gamma.\]
    Then,
    \[1_\mathcal{A}=\prod^{\leftarrow}_{\phi\in(0,1]}\Ss^Z_\phi.\]
    Moreover, the elements $1_\mathcal{A}$ and $\Ss^Z_\phi,\phi\in(0,1]$ are all grouplike, hence belong to $\hat{N}(\mathcal{A})$.
\end{theorem}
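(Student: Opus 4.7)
The plan is to expand the descending product on the right-hand side explicitly and match it with the first identity of Theorem \ref{reciprocity}. First I would check that the product is well-defined in the completion $\hat{C}(\mathcal{A})$: when evaluated at a fixed isomorphism class $[X]$, any nontrivial contribution $\delta^Z_{\gamma_1} * \cdots * \delta^Z_{\gamma_n}$ forces $\gamma_1 + \cdots + \gamma_n = \udv X$, and only finitely many such ordered tuples of nonzero classes exist. Expanding each factor as $\Ss^Z_\phi = 1 + \sum_{\phi(\gamma) = \phi}\delta^Z_\gamma$ and reading off the contribution as the choice of which summand to select at each phase, the product becomes
\[1 + \sum_{n \geqslant 1}\; \sum_{\substack{\gamma_1, \dots, \gamma_n \in K_0(\mathcal{A}) \setminus \{0\}\\ \phi(\gamma_1) > \cdots > \phi(\gamma_n)}} \delta^Z_{\gamma_1} * \cdots * \delta^Z_{\gamma_n}.\]
Regrouping by the total class $\gamma := \gamma_1 + \cdots + \gamma_n$ and applying the first identity of Theorem \ref{reciprocity}, the inner sum is exactly $\kappa_\gamma$. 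Since $\kappa_0 = 1$ accounts for the leading term, the product collapses to $\sum_{\gamma \in K_0(\mathcal{A})}\kappa_\gamma = 1_\mathcal{A}$.

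For the grouplike property of $1_\mathcal{A}$, I would sum the formula $\Delta(\kappa_\gamma) = \sum_{\gamma_1+\gamma_2=\gamma}\kappa_{\gamma_1} \otimes \kappa_{\gamma_2}$ over all $\gamma$ and recognize the result as $1_\mathcal{A} \otimes 1_\mathcal{A}$. For $\Ss^Z_\phi$, the key input is that by Lemma \ref{phase in ses}(3), the class of $Z$-semistable objects of phase $\phi$, together with the zero object, is closed under direct sums and direct summands. One then evaluates $\Delta(\Ss^Z_\phi)(M,N) = \Ss^Z_\phi(M \oplus N)$ against $(\Ss^Z_\phi \otimes \Ss^Z_\phi)(M,N) = \Ss^Z_\phi(M)\Ss^Z_\phi(N)$, and observes that both sides equal $1$ precisely when each of $M$ and $N$ is either zero or $Z$-semistable of phase $\phi$, hence they agree.

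The main obstacle is the bookkeeping in the first step: one must justify the formal interchange of summations when regrouping by total dimension vector, and identify the index set of the product expansion (tuples of classes with strictly decreasing phases) with the HN-type indexing in Theorem \ref{reciprocity}. Once the finiteness of contributions at each fixed $[X]$ secures convergence of the product in $\hat{C}(\mathcal{A})$ and this combinatorial identification is carried out carefully, the wall-crossing formula reduces directly to the HN-uniqueness content already encoded in Reineke's reciprocity.
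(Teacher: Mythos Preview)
Your proposal is correct and follows essentially the same approach the paper employs for the analogous triangulated statement (see the proof of the wall-crossing formula in \S4.8 and Lemma~\ref{SS grouplike}): expand the descending product, regroup by total class, invoke the first identity of Theorem~\ref{reciprocity}, and verify the grouplike claims by evaluating $\Delta$ on pairs $(M,N)$ using closure of the semistables of a fixed phase under direct sums and summands. The paper itself does not give a separate proof in the abelian case, citing \cite{Rei2002} instead, so your argument is exactly the expected one.
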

In the identity above, the left hand side $1_\mathcal{A}$ does not depend on the choice of the stability function $Z$, while any single $\Ss^Z_\phi$ does. Thus, the descending product 
\[\prod^{\leftarrow}_{\phi\in(0,1]}\Ss^Z_\phi\]
is an invariant under the changing of stability functions. For example, let us see the $A_2$ case again.
\begin{example}\label{A2,abelian}
    In \textbf{Example \ref{A2}}, we have determined all $Z$-semistable objects in every condition. When $\phi(S_2)>\phi(S_1)$, the only $Z$-semistable objects in $\mathcal{A}$ are of the form $S_1^{\oplus n}$ or $S_2^{\oplus n}$, $n\geqslant1$. So their phase must be $\phi(S_1)$ or $\phi(S_2)$, respectively, and hence the descending product is
    \[\prod^\leftarrow_{\phi\in(0,1]}\Ss^Z_\phi=\Ss^Z_{\phi(S_2)}*\Ss^Z_{\phi(S_1)}.\]
    If we set for every indecomposable $0\neq M\in\mathcal{A}$ that
    \[\Phi_M=\sum_{n=0}^\infty u_{[M^{\oplus n}]},\]
    then we have
    \[1_\mathcal{A}=\prod^\leftarrow_{\phi\in(0,1]}\Ss^Z_\phi=\Phi_{S_2}*\Phi_{S_1}.\]
    When $\phi(S_2)=\phi(S_1)=\phi$, every nonzero object in $\mathcal{A}$ is $Z$-semistable with phase $\phi$. So the descending product is trivially $1_\mathcal{A}$. When $\phi(S_2)<\phi(S_1)$, the $Z$-semistable objects in $\mathcal{A}$ can have three different types, $S_1^{\oplus n},S_2^{\oplus n}$ and $P_1^{\oplus n}$. By the same argument as in the first case, we obtain
    \[1_\mathcal{A}=\prod^\leftarrow_{\phi\in(0,1]}\Ss^Z_\phi=\Phi_{S_1}*\Phi_{P_1}*\Phi_{S_2}.\]
    Combining the two identity above yields
    \[\Phi_{S_2}*\Phi_{S_1}=\Phi_{S_1}*\Phi_{P_1}*\Phi_{S_2},\]
    which is known as the pentagon identity.
\end{example}
\subsection{Joyce's Element}
Now for $\gamma\in K_0(\mathcal{A})^+$, we define \textbf{Joyce's element} \cite{Joy2007}.
\[\epsilon^Z_\gamma=\sum_{\begin{subarray}{c}
    \\ \gamma_1+\dots+\gamma_n=\gamma\\ \phi(\gamma_i)=\phi(\gamma),1\leqslant i\leqslant n
\end{subarray}}\frac{(-1)^{n-1}}{n}\delta^Z_{\gamma_1}*\dots*\delta^Z_{\gamma_n}\in C(\mathcal{A}).\]
By calculating using the definition of the logarithm function, we obtain
\begin{align*}
    \log\Ss^Z_\phi&=\log\left(1+\sum_{ \phi(\gamma)=\phi}\delta^Z_\gamma\right)\\
    &=\sum_{n\geqslant1}\frac{(-1)^{n-1}}{n}\left(\sum_{\phi(\gamma)=\phi}\delta^Z_\gamma\right)^n\\
    &=\sum_{n\geqslant1}\frac{(-1)^{n-1}}{n}\sum_{\phi(\gamma_1)=\dots=\phi(\gamma_n)=\phi}\delta^Z_{\gamma_1}*\dots*\delta^Z_{\gamma_n}\\
    &=\sum_{\gamma\in K_0(\mathcal{A})^+,\phi(\gamma)=\phi}\epsilon^Z_\gamma
\end{align*}
\[,\]
or equivalently
\[\Ss^Z_\phi=\exp\left(\sum_{\gamma\in K_0(\mathcal{A})^+,\phi(\gamma)=\phi}\epsilon^Z_\gamma\right).\]
Consequently, $\epsilon_\gamma$ is primitive and hence supported on indecomposable objects.

\subsection{Summary: A Diagram by Bridgeland and Toledano Laredo}
In \cite{BT2012}, Bridgeland and Toledano Laredo draw a diagram to visualize the relationships between the three kinds of elements $(\kappa),(\delta)$ and $(\epsilon)$,
\begin{center}
    \begin{tikzcd}
(\epsilon) \arrow[rr, "\exp", bend left] &  & (\delta) \arrow[ll, "\log", bend left] \arrow[rr, "\text{Reineke inversion}", bend left] &  & (\kappa) \arrow[ll, "\text{HN filtration}", bend left]
\end{tikzcd}
\end{center}
which is the best summary of the theory introduced in this section.
\section{Generalizations to Triangulated Categories}
Throughout this chapter, we assume that $\mathcal{D}$ is a triangulated category over a field $k$ satisfying the following two conditions:
\begin{enumerate}
    \item $\mathcal{D}$ is Krull-Schmidt;
    \item $\mathcal{D}$ is \textbf{left homologically finite}, namely $\sum_{i\geqslant0}\dim_k\Hom_\mathcal{D}(X[i],Y)<+\infty$ for any $X,Y\in\mathcal{D}$.
\end{enumerate}
\begin{remark}
    These two conditions are, in fact, equivalent to the three conditions required in \cite{XX2006}, since the condition of left homological finiteness implies the condition of $\Hom$-finiteness.
\end{remark}
\begin{example} Let $\mathcal{A}$ be a $\Hom$-finite Krull-Schmidt abelian category.
    \begin{enumerate}
        \item The bounded derived category $D^b(\mathcal{A})$ satisfies our two conditions.
        \item The $n$-periodic derived category $D^b(A)/[n]$ does not satisfy our second condition. In particular, the root category $R(\mathcal{A})=D^b(\mathcal{A})/[2]$ does not satisfy our second condition.
    \end{enumerate}
\end{example}
For $M\in\mathcal{D}$, we denote by $\udv M$ the corresponding element of $M$ in $K_0(\mathcal{D})$.
\subsection{Derived Hall Algebras for Triangulated Categories}
We first introduce a structure of Hall algebra defined in \cite{XX2006}.

Let $k$ be a finite field with $q$ elements. For $L,M,N\in\mathcal{D}$, we define
\[(L,N)_M=\{f\in\Hom_\mathcal{D}(L,N)|\cone f\cong M\},\]
\[\{L,N\}=\prod_{i>0}|\Hom_{\mathcal{D}}(L[i],N)|^{(-1)^i},\]
and
\[F_{MN}^L=\frac{|(L,N)_{M[1]}|}{|\Aut N|}\frac{\{L,N\}}{\{N,N\}}\in\mathbb{Q}.\]
\begin{theorem}[\cite{XX2006}]
    Let $\mathcal{H}(\mathcal{D})$ be the $\mathbb{Q}$-vector space with basis the isomorphism classes of objects in $\mathcal{D}$. Define a binary operation $*$ on the basis of $\mathcal{H}(\mathcal{D})$:
    \[u_{[M]}*u_{[N]}=\sum_{[L]}F_{MN}^Lu_{[L]},\]
    and linearly span it to the whole space $\mathcal{H}(\mathcal{D})$. Then $(\mathcal{H}(\mathcal{D}),*)$ is an associated $\mathbb{Q}$-algebra with unit $u_{[0]}$.
\end{theorem}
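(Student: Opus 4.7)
The plan is to verify the two algebra axioms separately: the unit law and associativity, with the bulk of the work in the latter.

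For the unit axiom, I would show directly that $F^L_{0M} = \delta_{[L],[M]}$ and $F^L_{M0} = \delta_{[L],[M]}$. For instance, $(0,M)_{L[1]}$ is nonempty only when the zero morphism $0\to M$ has cone $L[1]$, forcing $L \cong M$; in that case $(0,M)_{M[1]}$ is a single point, $|\Aut M| \cdot 1 = |\Aut M|$, and $\{0,M\}/\{M,M\} = 1/\{M,M\} \cdot \{M,M\} \dots$ — after cancellation one gets $F^M_{0M} = 1$. The left homological finiteness is what makes each of the finite products $\{L,N\}$ well defined.

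For associativity, I would expand
\[(u_{[L]} * u_{[M]}) * u_{[N]} = \sum_{[Y]} \Bigl(\sum_{[X]} F^X_{LM} F^Y_{XN}\Bigr) u_{[Y]}\]
and similarly the other order, so that the theorem reduces to the identity
\[\sum_{[X]} F^X_{LM} F^Y_{XN} \;=\; \sum_{[Z]} F^Y_{LZ} F^Z_{MN}\]
for every quadruple $L,M,N,Y$. The left side, after unwinding the definitions, counts (weighted by $|\Aut|$ and the $\{-,-\}$ factors) pairs of morphisms $L\xrightarrow{f} X$ with cone $M[1]$ together with $X\xrightarrow{g} Y$ with cone $N[1]$; the right side counts pairs $L\to Y$ with cone $Z[1]$ together with $M\to Z$ with cone $N[1]$. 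The octahedral axiom applied to the composable pair $f,g$ produces exactly the data required by the right hand side, and conversely. The key step is to check that this correspondence is bijective at the level of weighted counts, which comes down to counting morphisms of octahedra using the long exact sequences of $\Hom$-groups obtained by applying $\Hom(L,-)$ (and its shifts) to the triangle $M\to Z\to N$.

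The main obstacle, and the place where all the definitions conspire, is the bookkeeping of the factors $\{L,N\}/\{N,N\}$. The ratios $|(L,N)_{M[1]}|/|\Aut N|$ convert "morphisms" into "triangles up to isomorphism on the rightmost vertex", but the resulting counts on the two sides differ by the cardinalities of certain $\Hom_\mathcal{D}(L[i],N)$ for $i\geqslant 1$ that appear as kernels and images along the long exact sequences associated to an octahedron. The telescoping alternating product
\[\prod_{i>0} |\Hom_\mathcal{D}(L[i],N)|^{(-1)^i}\]
is designed precisely so that these discrepancies cancel out when one sums the octahedral correspondence over all intermediate isomorphism classes $[X]$ (respectively $[Z]$). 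I would finally appeal to the left homological finiteness of $\mathcal{D}$ to guarantee that all these products, and the sums over $[X]$ and $[Z]$, are finite, so that the identity holds in $\mathcal{H}(\mathcal{D})$. The hardest and most computational part is the cancellation argument for the $\{-,-\}$ terms; the rest is combinatorics of triangles organized by the octahedral axiom.
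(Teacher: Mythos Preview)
The paper does not give its own proof of this theorem; it is quoted from \cite{XX2006} and simply cited. So there is nothing in the present paper to compare your argument against.

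That said, your outline is essentially the strategy of the original Xiao--Xu proof: reduce associativity to the numerical identity $\sum_{[X]} F^X_{LM}F^Y_{XN}=\sum_{[Z]} F^Y_{LZ}F^Z_{MN}$, interpret both sides as weighted counts of composable pairs of morphisms, and use the octahedral axiom to pass between the two, with the alternating products $\{-,-\}$ absorbing the discrepancies coming from the long exact $\Hom$-sequences. The left homological finiteness hypothesis is exactly what makes all counts and products finite.

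One small slip to watch: with the paper's conventions, $F_{MN}^L$ involves $(L,N)_{M[1]}$, so for the left unit $u_{[0]}*u_{[M]}$ one needs $(L,M)_{0[1]}$ (isomorphisms $L\to M$), not $(0,M)_{L[1]}$ as you wrote; the cone of $0\to M$ is $M$, not $M[1]$, so your index placement there would force $L\cong M[-1]$ rather than $L\cong M$. This is a bookkeeping error in the sketch, not a gap in the strategy, but since the whole proof lives or dies on getting these indices and shift conventions exactly right, it is worth straightening out before you write the associativity computation.
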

We can also view an element $f$ in $\mathcal{H}(\mathcal{A})$ as a function on isomorphism classes by defining $f(M)$ to be the coefficient of $u_{[M]}$ in the expression of $f$ as a linear combination of all $u_{[M]}$. This approach allows us to maintain the perspective and notation introduced in Section 3.
\subsection{Stability Conditions}
We follow the introduction in \cite{Bri2002}. We point out that it is possible for a triangulated category (for example, a periodic one) to have no stability condition in the sense of Bridgeland. But in many cases, such as for bounded derived categories of abelian categories, stability conditions exist.
\begin{definition}
    Let $\mathcal{D}$ be a triangulated category. A \textbf{stability condition} on $\mathcal{D}$ is a pair $\sigma=(Z,\mathcal{P})$, where
    \begin{itemize}
        \item $Z:K_0(\mathcal{D})\rightarrow\mathbb{C}$ is a group homomorphism, called the \textbf{central charge};
        \item $\mathcal{P}=\{\mathcal{P}(\phi)|\phi\in\mathbb{R}\}$ is a family of full additive extension-closed subcategories of $\mathcal{D}$,
    \end{itemize}
    satisfying the following conditions:
    \begin{enumerate}
        \item If $M\in\mathcal{P}(\phi)$, then $Z(M)=m_Me^{i\pi\phi}$ for some $m_M\in\mathbb{R}_+$.
        \item $\mathcal{P}(\phi+1)=\mathcal{P}(\phi)[1]$ for any $\phi\in\mathbb{R}$.
        \item For $A_i\in\mathcal{P}(\phi_i),i=1,2$ with $\phi_1>\phi_2$, we have $\Hom_\mathcal{D}(A_1,A_2)=0$.
        \item (HN filtration) For any $0\neq M\in\mathcal{D}$, there is a sequence of triangles
        \begin{center}
            \begin{tikzcd}[sep=tiny]
0=M_0 \arrow[rr] &                        & M_1 \arrow[rr] \arrow[ld] &                        & M_2 \arrow[r] \arrow[ld] & \cdots \arrow[r] & M_{n-1} \arrow[rr] &                        & M_n=M \arrow[ld] \\
                 & A_1 \arrow[lu, dashed] &                           & A_2 \arrow[lu, dashed] &                          &                  &                    & A_n \arrow[lu, dashed] &                 
\end{tikzcd}
        \end{center}
        such that each $A_i$ belongs to $\mathcal{P}(\phi_i)$ for some $\phi_i$, $1\leqslant i\leqslant n$, with $\phi_1>\cdots>\phi_n$. This sequence is called an \textbf{Harder-Narasimhan filtration}, or an \textbf{HN filtration} for short. It is unique up to isomorphism by \cite{Bri2002}. Thus we can define
        \[\phi^+(M)=\phi_1,\phi^-(M)=\phi_n.\]
        We call the $n$-tuple of dimension vectors $(\udv A_1,\dots,\udv A_n)$ the \textbf{HN type} of $M$.
    \end{enumerate}
\end{definition}

An object $0\neq M\in\mathcal{D}$ is said to be $\sigma$-\textbf{semistable} with \textbf{phase} $\phi$ if $M\in\mathcal{P}(\phi)$, or equivalently, $\phi^+(M)=\phi^-(M)=\phi$. Moreover, $M$ is said to be $\sigma$-\textbf{stable} if $M$ is a simple object in the category $\mathcal{P}(\phi)$.

The following lemma follows immediately from the definition.
\begin{lemma}\label{stability under shift}
    Let $0\neq M\in\mathcal{D}$ and $n\in\mathbb{Z}$, then $M$ is $\sigma$-semistable if and only if $M[n]$ is $\sigma$-semistable. Moreover, $M$ is $\sigma$-stable if and only if $M[n]$ is $\sigma$-stable.
\end{lemma}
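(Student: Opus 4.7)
The proof will be almost immediate from the axioms of a stability condition, so my plan is mostly to unwind definitions carefully and to handle the shift-by-$n$ via iteration of the shift-by-$1$ axiom.

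First I would reduce to the case $n=1$ by induction on $|n|$: since the claim is symmetric in the roles of $M$ and $M[n]$ (replacing $M$ by $M[-n]$ swaps the two sides of each equivalence), it suffices to prove both equivalences for $n=1$, and then iterate. So the real content is: $M$ is $\sigma$-semistable (resp.\ $\sigma$-stable) iff $M[1]$ is.

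For the semistability part, suppose $M$ is $\sigma$-semistable. By definition this means $M \in \mathcal{P}(\phi)$ for some $\phi \in \mathbb{R}$. Axiom (2) of the stability condition gives $\mathcal{P}(\phi)[1] = \mathcal{P}(\phi+1)$, so $M[1] \in \mathcal{P}(\phi+1)$, which by definition says $M[1]$ is $\sigma$-semistable (with phase $\phi+1$). The converse is obtained by applying the shift functor $[-1]$ and using axiom (2) again to identify $\mathcal{P}(\phi+1)[-1] = \mathcal{P}(\phi)$.

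For the stability part, recall that $M$ is $\sigma$-stable iff it is a simple object of the additive category $\mathcal{P}(\phi)$. The shift functor $[1]\colon\mathcal{D}\to\mathcal{D}$ is an autoequivalence of triangulated categories, and by axiom (2) it restricts to an additive equivalence $\mathcal{P}(\phi)\xrightarrow{\sim}\mathcal{P}(\phi+1)$. Any additive equivalence preserves the property of being simple (it sends subobjects to subobjects bijectively), so $M$ is simple in $\mathcal{P}(\phi)$ iff $M[1]$ is simple in $\mathcal{P}(\phi+1)$. This gives the $n=1$ case for stability, and induction as above finishes the proof.

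I do not anticipate any real obstacle here: the entire argument is a direct reading of axiom (2) together with the fact that the shift functor is an autoequivalence. The only mild subtlety worth stating is the iteration step, and the reminder that ``simple in a full additive subcategory'' is preserved under additive equivalences, so that the stability clause does not require any extra structure beyond what axiom (2) already provides.
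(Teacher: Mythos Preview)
Your argument is correct and is exactly the immediate unwinding of axiom~(2) that the paper has in mind; the paper in fact gives no written proof, merely stating that the lemma ``follows immediately from the definition.'' Your reduction to $n=1$ and the observation that the shift functor restricts to an additive equivalence $\mathcal{P}(\phi)\to\mathcal{P}(\phi+1)$ (hence preserves simples) is precisely the intended justification.
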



For an interval $I$, we define a subcategory 
\[\mathcal{P}(I)=\{0\}\cup\{0\neq M\in\mathcal{D}|\phi^+(M),\phi^-(M)\in I\}\]
of $\mathcal{D}$. Then the pair of subcategories $(\mathcal{P}(-\infty,\phi],\mathcal{P}(\phi,+\infty))$ for any $\phi\in\mathbb{R}$ is a bounded t-structure on $\mathcal{D}$, with heart $\mathcal{P}(\phi,\phi+1]$. In pariticular, when $\phi=0$, the heart $\phi(0,1]$ of the t-structure $(\mathcal{P}(-\infty,0],\mathcal{P}(1,+\infty))$ is called the \textbf{heart} of $\sigma$.

Bridgeland proved the following important theorem, which shows the relationship between a stability function on an abelian category and a stability condition on a triangulated category.
\begin{theorem}[{\cite[Proposition 5.3]{Bri2002}}]\label{heart}
    To give a stability condition on a triangulated category $\mathcal{D}$ is equivalent to giving a bounded t-structure on $\mathcal{D}$ and a stability function having the HN property (see \textbf{Definition \ref{HN filtration}}) on its heart.
\end{theorem}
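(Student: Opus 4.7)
The plan is to prove the equivalence in two directions. For the forward direction, given a stability condition $\sigma = (Z, \mathcal{P})$ on $\mathcal{D}$, the paragraph preceding the theorem has already recorded that $(\mathcal{P}(-\infty, 0], \mathcal{P}(0, +\infty))$ is a bounded t-structure with heart $\mathcal{A} := \mathcal{P}(0, 1]$. I would take the associated stability function to be $Z$ restricted to $K_0(\mathcal{A}) \cong K_0(\mathcal{D})$; that its image lies in $\mathbb{H} \cup \mathbb{R}_-$ follows from axiom (1) for $\sigma$, since every $0 \neq M \in \mathcal{A}$ has all HN factors with phases in $(0,1]$. I would then show that a short exact sequence in $\mathcal{A}$ is the same thing as a triangle in $\mathcal{D}$ with all three terms in $\mathcal{A}$, and that the HN filtration guaranteed by axiom (4) of $\sigma$, when applied to $0 \neq M \in \mathcal{A}$, has all $A_i \in \mathcal{A}$. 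Any $A_i$ outside $\mathcal{A}$ would force some phase $\phi_i$ outside $(0,1]$ and hence, via the bounded t-structure, pull $M$ out of $\mathcal{A}$, a contradiction. This gives the HN property in the abelian sense of Definition \ref{HN filtration}.

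For the reverse direction, given a bounded t-structure with heart $\mathcal{A}$ and a stability function $Z$ on $\mathcal{A}$ with the HN property, I would define $\mathcal{P}(\phi)$ for $\phi \in (0,1]$ to be the full additive subcategory of $\mathcal{A}$ whose nonzero objects are $Z$-semistable of phase $\phi$, and extend to all of $\mathbb{R}$ by forcing axiom (2), namely $\mathcal{P}(\phi + 1) = \mathcal{P}(\phi)[1]$. The central charge is $Z$ extended along $K_0(\mathcal{D}) \cong K_0(\mathcal{A})$. Axioms (1) and (2) are then immediate from the construction, using $Z(X[1]) = -Z(X)$ on the Grothendieck group.

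The substantive content lies in axioms (3) and (4). For axiom (3), given $A_i \in \mathcal{P}(\phi_i)$ with $\phi_1 > \phi_2$, I would write $\phi_i = \psi_i + n_i$ with $\psi_i \in (0,1]$, so that $A_i = B_i[n_i]$ for some $Z$-semistable $B_i \in \mathcal{A}$, and split into cases. When $n_1 > n_2$ the vanishing follows from $\Hom_\mathcal{D}(\mathcal{A}[k], \mathcal{A}) = 0$ for $k > 0$, a consequence of the t-structure axioms; when $n_1 = n_2$, the statement reduces after shifting to Lemma \ref{hom to phi big} applied inside $\mathcal{A}$. For axiom (4), the strategy is a two-step filtration: the bounded t-structure produces a cohomological filtration of $M$ whose factors are the shifts $H^i(M)[-i]$, and each $H^i(M) \in \mathcal{A}$ admits an abelian HN filtration by hypothesis. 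Refining the cohomological filtration by the abelian ones, using the octahedral axiom to build the required triangles, yields an HN filtration of $M$ in $\mathcal{D}$ whose phases are strictly decreasing, since the shift indices already order the blocks and within each block the abelian HN phases are strictly decreasing.

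The main obstacle is a careful treatment of axiom (3) across the boundary, where $\phi_1 - \phi_2$ is a positive integer (so $\psi_1 = \psi_2$ and the heart-based Lemma \ref{hom to phi big} does not apply directly), together with the combinatorial bookkeeping needed to arrange the refined filtration of axiom (4) into a single strictly decreasing phase sequence. Uniqueness of the resulting HN filtration then follows from axiom (3) by the standard inductive argument of Bridgeland in \cite{Bri2002}.
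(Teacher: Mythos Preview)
The paper does not supply its own proof of this theorem; it is stated with a direct citation to \cite[Proposition 5.3]{Bri2002} and no argument is given. Your outline is essentially Bridgeland's original proof referenced there, and it is correct: the forward direction uses the heart $\mathcal{P}(0,1]$ together with the restriction of $Z$, and the reverse direction builds $\mathcal{P}$ from the semistable objects in the heart and extends by shift, verifying axiom~(3) via the t-structure vanishing $\Hom_\mathcal{D}(\mathcal{A}[k],\mathcal{A})=0$ for $k>0$ together with Lemma~\ref{hom to phi big}, and axiom~(4) by refining the cohomological (truncation) filtration by the abelian HN filtrations of the $H^i(M)$. The boundary worry you raise for axiom~(3) is not actually an issue: writing $\phi_i=\psi_i+n_i$ with $\psi_i\in(0,1]$, the inequality $\phi_1>\phi_2$ forces $n_1\geqslant n_2$, and when $n_1>n_2$ the t-structure vanishing already applies regardless of whether $\psi_1=\psi_2$. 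Likewise, the phase bookkeeping in axiom~(4) works cleanly because the factors of $H^i(M)[-i]$ have phases in $(-i,-i+1]$ while those of $H^{i+1}(M)[-i-1]$ have phases in $(-i-1,-i]$, so the concatenated sequence is strictly decreasing without any merging needed.
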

\begin{corollary}
    Let $\mathcal{A}$ be an abelian category of finite length. There exists a one-to-one correspondence between stability conditions on $D^b(\mathcal{A})$ with $\mathcal{A}$ as its heart and stability functions on $\mathcal{A}$. 
\end{corollary}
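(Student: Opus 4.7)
The plan is to obtain this corollary as a direct specialization of Theorem \ref{heart}. That theorem sets up a bijection between stability conditions on a triangulated category $\mathcal{D}$ and pairs (bounded t-structure on $\mathcal{D}$, stability function with the HN property on its heart). So the proposal is to fix the triangulated category to be $D^b(\mathcal{A})$, fix the t-structure to be the standard one (whose heart is $\mathcal{A}$ itself), and then see what remains.

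More precisely, the first step is to observe that among all bounded t-structures on $D^b(\mathcal{A})$, there is a canonical one, namely the standard t-structure $(D^{\leqslant 0}(\mathcal{A}),D^{\geqslant 0}(\mathcal{A}))$, whose heart is exactly $\mathcal{A}$. Restricting the bijection of Theorem \ref{heart} to stability conditions $\sigma$ whose heart $\mathcal{P}(0,1]$ equals $\mathcal{A}$ forces the t-structure component to be this standard one. Hence such stability conditions correspond bijectively to stability functions on $\mathcal{A}$ that have the HN property.

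The second step is to eliminate the HN-property condition. This is where the finite-length hypothesis enters: by the Proposition proved earlier in Section 2 (citing \cite[Proposition 2.4]{Bri2002}), every stability function on a finite-length abelian category automatically has the HN property. Therefore the set of stability functions on $\mathcal{A}$ with the HN property coincides with the set of all stability functions on $\mathcal{A}$, and the bijection of the previous paragraph becomes the desired one-to-one correspondence.

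The main potential obstacle is the implicit identification of ``the heart of $\sigma$ equals $\mathcal{A}$'' with ``the t-structure of $\sigma$ is the standard one.'' This is not a deep issue but it deserves a line: a bounded t-structure on a triangulated category is determined by its heart, so fixing the heart to be $\mathcal{A}$ pins down the t-structure uniquely, justifying the reduction above. Once this is noted, the corollary follows at once from Theorem \ref{heart} and the finite-length proposition, with no further computation required.
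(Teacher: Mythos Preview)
Your proposal is correct and follows exactly the route the paper intends: the corollary is stated in the paper without proof, as an immediate consequence of Theorem~\ref{heart} together with the earlier proposition that every stability function on a finite-length abelian category has the HN property. Your write-up simply makes explicit the two steps (fixing the t-structure via its heart, then dropping the HN condition using finite length) that the paper leaves to the reader.
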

We have the following easy lemma.
\begin{lemma}\label{semistable vs direct sum}
    Let $M,N\in\mathcal{D}$, then $M\oplus N\in\mathcal{P}(\phi)$ if and only if $M\in\mathcal{P}(\phi)$ and $N\in\mathcal{P}(\phi)$.
\end{lemma}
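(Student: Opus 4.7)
The plan is to handle the two directions separately, with the forward direction requiring real work.

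For the easy direction ($\Leftarrow$), if $M, N \in \mathcal{P}(\phi)$, then $M \oplus N$ fits into the split distinguished triangle $M \to M \oplus N \to N \to M[1]$, and since $\mathcal{P}(\phi)$ is extension-closed by the definition of a stability condition, we conclude $M \oplus N \in \mathcal{P}(\phi)$.

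For the converse ($\Rightarrow$), the strategy is to first establish the auxiliary identity
\[
\phi^+(X \oplus Y) = \max\{\phi^+(X),\phi^+(Y)\}, \qquad \phi^-(X \oplus Y) = \min\{\phi^-(X),\phi^-(Y)\}
\]
for arbitrary $X, Y \in \mathcal{D}$. To prove it, I would build an HN filtration of $X \oplus Y$ out of those of $X$ and $Y$. List the phases occurring in either HN filtration as $\psi_1 > \psi_2 > \cdots > \psi_r$, and for each $s$ let $C_s = X^{\psi_s} \oplus Y^{\psi_s}$ where $X^{\psi_s}$ (resp.\ $Y^{\psi_s}$) is the HN factor of $X$ (resp.\ $Y$) at phase $\psi_s$, or zero if absent. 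By the easy direction already proved, each $C_s$ lies in $\mathcal{P}(\psi_s)$. I would then inductively produce a tower $0 = Z_0 \to Z_1 \to \cdots \to Z_r = X \oplus Y$ with cones $C_s$ by repeatedly applying the octahedral axiom, using the split triangle $X \to X \oplus Y \to Y$ to merge the filtrations of $X$ and $Y$ one phase at a time. Since the phases strictly decrease, this is an HN filtration of $X \oplus Y$; uniqueness gives the claimed identity.

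With the identity in hand, the converse is immediate: if $M \oplus N \in \mathcal{P}(\phi)$, then
\[
\phi = \max\{\phi^+(M),\phi^+(N)\} \geq \phi^+(M) \geq \phi^-(M) \geq \min\{\phi^-(M),\phi^-(N)\} = \phi,
\]
so $\phi^+(M) = \phi^-(M) = \phi$, i.e.\ $M \in \mathcal{P}(\phi)$, and symmetrically $N \in \mathcal{P}(\phi)$.

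The main obstacle is the octahedral bookkeeping in the inductive construction of the combined HN filtration. The interleaving is intuitively obvious, but realizing it as an honest sequence of distinguished triangles in $\mathcal{D}$ is the one nontrivial technical point. A more elementary route that avoids this would invoke axiom (3) of the stability condition applied to the morphism from the top HN factor of $M$ into $M \oplus N$ via the split inclusion $M \hookrightarrow M \oplus N$, and dually to $M \oplus N \twoheadrightarrow M \to A_k$ for the bottom factor; however, confirming that these HN structure maps remain nonzero after composition requires a small auxiliary argument, so I prefer the filtration approach above.
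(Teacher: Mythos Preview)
Your proof is correct, but it takes a more elaborate route than the paper, and ironically the ``more elementary route'' you mention and set aside is precisely the paper's argument. The paper observes that if $M^+ \in \mathcal{P}(\phi^+(M))$ is the top HN factor of $M$ with its nonzero structure map $M^+ \to M$, then the composite $M^+ \to M \hookrightarrow M \oplus N$ is still nonzero (this is automatic: composing with a split monomorphism cannot kill a nonzero map, since the retraction recovers it). Axiom (3) then forces $\phi^+(M) \leq \phi$. Dually, $M \oplus N \twoheadrightarrow N \to N^-$ gives $\phi \leq \phi^-(N)$, and swapping the roles of $M$ and $N$ finishes. So the ``small auxiliary argument'' you were worried about is a one-liner, and this route is considerably shorter than constructing the merged HN filtration.

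Your filtration approach does work and yields the stronger identity $\phi^\pm(X\oplus Y)=\max/\min\{\phi^\pm(X),\phi^\pm(Y)\}$, which is a nice dividend. But the octahedral bookkeeping you flag as the main obstacle is unnecessary: direct sums of distinguished triangles are distinguished, so if $X$ and $Y$ have HN towers $(X_i)$ and $(Y_i)$ (padded to the same phase list $\psi_1>\cdots>\psi_r$), then $(X_i\oplus Y_i)$ is already a tower for $X\oplus Y$ with cones $X^{\psi_i}\oplus Y^{\psi_i}\in\mathcal{P}(\psi_i)$, no octahedra required. One small omission: your argument with $\phi^\pm$ tacitly assumes $M,N\neq 0$; the paper handles the trivial case $M=0$ or $N=0$ separately at the outset.
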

\begin{proof}
    The if part follows from the assumption of $\mathcal{P}(\phi)$ to be additive. Now let us show the only if part. It has nothing to show when one of $M$ and $N$ is zero, so we will assume that $M,N\neq 0$.

    Consider the triangle
    \[M\xrightarrow{i} M\oplus N\xrightarrow{p} N\rightarrow M[1].\]
    Let $\phi_1=\phi^+(M),\phi_2=\phi^-(N)$, so there exist $M^+\in\mathcal{P}(\phi_1),N^-\in\mathcal{P}(\phi_2)$ with nonzero morphisms $M^+\rightarrow M$ and $N\rightarrow N^-$. Therefore, the morphisms
    \[M^+\rightarrow M\xrightarrow{i}M\oplus N, M\oplus N\xrightarrow{p}N\rightarrow N^-\]
    are nonzero, so $\phi_1\leqslant\phi^+(M\oplus N)=\phi=\phi^-(M\oplus N)\leqslant\phi_2$. That is,
    \[\phi^+(M)\leqslant\phi\leqslant\phi^-(N).\]
    The same examination on the triangle
    \[N\rightarrow M\oplus N\rightarrow M\rightarrow N[1]\]
    yields
    \[\phi^+(N)\leqslant\phi\leqslant\phi^-(M).\]
    Thus $\phi^+(M)=\phi^-(M)=\phi=\phi^+(N)=\phi^-(N)$ and we finish the proof.
\end{proof}
\begin{corollary}\label{Stable is indecomposable}
    If $0\neq M\in\mathcal{D}$ is $\sigma$-stable, then $M$ is indecomposable.
\end{corollary}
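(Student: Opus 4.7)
The plan is to argue by contradiction, reducing indecomposability in $\mathcal{D}$ to the simplicity of $M$ inside some $\mathcal{P}(\phi)$, with the key reduction supplied by \textbf{Lemma \ref{semistable vs direct sum}}.

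Since $M$ is $\sigma$-stable, by definition there exists $\phi\in\mathbb{R}$ such that $M\in\mathcal{P}(\phi)$ and $M$ is a simple object in the category $\mathcal{P}(\phi)$. Suppose toward a contradiction that $M\cong M_1\oplus M_2$ in $\mathcal{D}$ with $M_1,M_2$ both nonzero. First I would apply \textbf{Lemma \ref{semistable vs direct sum}} to this decomposition to conclude that both $M_1$ and $M_2$ lie in $\mathcal{P}(\phi)$. This is the crucial step: it confines both summands to the same abelian slice in which $M$ has been declared simple.

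Next, working inside $\mathcal{P}(\phi)$, the canonical split inclusion $M_1\hookrightarrow M_1\oplus M_2\cong M$ is a monomorphism whose cokernel is $M_2$. Since $M_1\neq 0$ and $M_2\neq 0$, this monomorphism is neither zero nor an isomorphism, exhibiting $M_1$ as a proper nonzero subobject of $M$ in $\mathcal{P}(\phi)$. This contradicts the simplicity of $M$ in $\mathcal{P}(\phi)$, and so no such decomposition can exist; $M$ is indecomposable.

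There is essentially no serious obstacle here. All the real content has been absorbed into \textbf{Lemma \ref{semistable vs direct sum}}, which forces a direct-sum decomposition of a semistable object to stay entirely within a single phase. What remains is the purely formal observation that a nontrivial direct sum decomposition in an abelian category automatically produces a proper nonzero subobject — mirroring exactly the abelian-category statement \textbf{Lemma \ref{stable,indecomposable,simple}}(1).
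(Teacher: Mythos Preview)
Your proof is correct and follows essentially the same approach as the paper: both arguments invoke \textbf{Lemma \ref{semistable vs direct sum}} to force the summands $M_1,M_2$ into $\mathcal{P}(\phi)$, and then observe that a nontrivial direct summand gives a proper nonzero subobject, contradicting simplicity. Your write-up is slightly more explicit about why $M_1$ is a proper subobject (via the split monomorphism with cokernel $M_2$), but the argument is the same.
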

\begin{proof}
    Suppose that $M=M_1\oplus M_2$. By assumption, $M\in\mathcal{P}(\phi)$ for some $\phi\in\mathbb{R}$. Thus, by \textbf{Lemma \ref{semistable vs direct sum}}, we have $M_1,M_2\in\mathcal{P}(\phi)$ and hence are subobjects of $M$ in $\mathcal{P}(\phi)$. Since $M$ is $\sigma$-stable, it is simple in $\mathcal{P}(\phi)$, so one of $M_1$ and $M_2$ is zero and the other is $M$.
\end{proof}
\begin{lemma}\label{phase increasing}
    Let $M\rightarrow X\rightarrow N\rightarrow M[1]$ be a triangle in $\mathcal{D}$ with $M,X,N$ are all $\sigma$-semistable, then $\phi(M)\leqslant\phi(X)\leqslant\phi(N)$.
\end{lemma}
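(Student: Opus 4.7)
The plan is to prove each inequality separately by a contradiction argument, using axiom (3) of a stability condition (Hom vanishing between higher- and lower-phase semistable objects) together with \textbf{Lemma \ref{semistable vs direct sum}} on when a direct sum is semistable.

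For the inequality $\phi(M)\leqslant\phi(X)$, I will argue by contradiction: suppose $\phi(M)>\phi(X)$. Since both $M$ and $X$ are $\sigma$-semistable, axiom (3) forces $\Hom_\mathcal{D}(M,X)=0$. Hence the first morphism $f\colon M\to X$ in the triangle is zero, and the triangle splits, giving an isomorphism $N\cong X\oplus M[1]$. Since $N$ is $\sigma$-semistable of phase $\phi(N)$, \textbf{Lemma \ref{semistable vs direct sum}} forces $X,M[1]\in\mathcal{P}(\phi(N))$; in particular $\phi(X)=\phi(N)$ and, using axiom (2), $\phi(M)+1=\phi(M[1])=\phi(N)=\phi(X)$. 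But then $\phi(M)=\phi(X)-1<\phi(X)$, contradicting $\phi(M)>\phi(X)$.

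For the inequality $\phi(X)\leqslant\phi(N)$, I will rotate the triangle to $X\xrightarrow{g}N\xrightarrow{h}M[1]\to X[1]$ and apply exactly the symmetric argument: if $\phi(X)>\phi(N)$, then $\Hom_\mathcal{D}(X,N)=0$ by axiom (3), so $g=0$, the rotated triangle splits as $M[1]\cong N\oplus X[1]$, equivalently $M\cong N[-1]\oplus X$. Then \textbf{Lemma \ref{semistable vs direct sum}} applied to the semistable $M$ gives $\phi(X)=\phi(M)$ and $\phi(N)-1=\phi(N[-1])=\phi(M)$, so $\phi(N)=\phi(X)+1>\phi(X)$, again a contradiction.

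There is no real obstacle here: the two axioms of a stability condition together with \textbf{Lemma \ref{semistable vs direct sum}} do all the work. The only thing to be careful about is the shift convention $\phi(Y[1])=\phi(Y)+1$ (from axiom (2)), which is exactly what produces the contradiction in both halves of the argument.
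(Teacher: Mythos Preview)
Your argument is correct. The splitting of a triangle when one map is zero, combined with \textbf{Lemma \ref{semistable vs direct sum}} and the shift axiom $\phi(Y[1])=\phi(Y)+1$, yields the contradictions exactly as you describe. Since $M,X,N$ are all assumed $\sigma$-semistable and hence nonzero, there are no degenerate cases to worry about.

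Your route differs from the paper's. The paper first compares $\phi(M)$ and $\phi(N)$ directly: if $\phi(M)>\phi(N)$, then the given triangle already displays a length-two HN filtration of $X$, contradicting the semistability of $X$ (whose HN filtration has length one). Having secured $\phi(M)\leqslant\phi(N)$, the paper then observes that $X$ lies in the extension closure $\mathcal{P}(\phi(N))*\mathcal{P}(\phi(M))\subseteq\mathcal{P}[\phi(M),\phi(N)]$, which immediately gives $\phi(X)\in[\phi(M),\phi(N)]$. So the paper invokes uniqueness of HN filtrations and extension-closedness of intervals in the slicing, getting both inequalities at once. Your approach instead handles each inequality separately via Hom-vanishing and splitting, never touching HN uniqueness or interval subcategories; it is a bit more elementary but relies more heavily on \textbf{Lemma \ref{semistable vs direct sum}}. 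Both arguments are short and self-contained; the paper's has the mild conceptual advantage of making visible where the extension-closed structure of the slicing enters.
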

\begin{proof}
    If $\phi(M)>\phi(N)$, then
    \begin{center}
        \begin{tikzcd}
0 \arrow[rr] &                      & M \arrow[rr] \arrow[ld] &                      & X \arrow[ld] \\
             & M \arrow[lu, dashed] &                         & N \arrow[lu, dashed] &             
\end{tikzcd}
    \end{center}
    is an HN filtration of $X$, which contradicts the assumption that $X$ is $\sigma$-semistable. So $\phi(M)\leqslant\phi(N)$.

    Note that
    \[X\in\mathcal{P}(\phi(N))*\mathcal{P}(\phi(M))\subseteq\mathcal{P}[\phi(M),\phi(N)],\]
    so $\phi(X)\in[\phi(M),\phi(N)]$.
\end{proof}
\subsection{Characteristic Functions}
In this subsection, we will define the two types of characteristic functions $(\kappa)$ and $(\delta)$, which will be similar to those introduced in Section 3. We follow the description in \cite{WC2023}, although we employ significantly different notations.

Firstly, for $0\neq\gamma\in K_0(\mathcal{D})$, we define
\[\kappa_\gamma(M)=\begin{cases}
    1,&\udv M=\gamma,\\
    0,&\udv M\neq\gamma,
\end{cases}\]
together with
\[\kappa_0(M)=\begin{cases}
    1,&\udv M=0\text{ and }M\neq0,\\
    0,&\udv M\neq\gamma\text{ or }M=0.
\end{cases}\]
There is a little different from the case in abelian categories at the dimension vector $\gamma=0$, because in a triangulated category a nonzero object may have zero dimension vector, such as  $X\oplus X[1]$. After we fix a stability condition $\sigma=(Z,\mathcal{P})$ on $\mathcal{D}$, we can define
\[\delta^\sigma_\gamma(M)=\begin{cases}
    1,&\udv M=\gamma\text{ and $M$ is $\sigma$-semistable},\\
    0,&\text{otherwise},
\end{cases}\]
for $0\neq\gamma\in K_0(\mathcal{D})$.
We aim to reconstruct the diagram
\begin{center}
    \begin{tikzcd}
(\epsilon) \arrow[rr, "\exp", bend left] &  & (\delta) \arrow[ll, "\log", bend left] \arrow[rr, "\text{Reineke inversion}", bend left] &  & (\kappa) \arrow[ll, "\text{HN filtration}", bend left]
\end{tikzcd}
\end{center}
within the context of abelian categories. Let us examine it arrow by arrow.
\subsection{The Right Lower Arrow: HN Filtration}
This question has been thoroughly examined in \cite{WC2023}. We now present it using our notation and lay the groundwork for the subsequent subsections.

For $\gamma\in K_0(\mathcal{D})$, consider the set
\[\mathbf{R}_\gamma=\{0\neq M\in\mathcal{D}|\udv M=\gamma\}.\]
With our notation, $\kappa_\gamma$ is the characteristic function supported on $\mathbf{R}_\gamma$. For an $s$-tuple of dimension vectors $\gamma^*=(\gamma_1,\dots,\gamma_s)$, we define
another set
\[\mathbf{R}_{\gamma^*}^{HN}=\{0\neq M\in\mathcal{D}|\text{the HN type of $M$ is equal to $\gamma^*$}\},\]
with $\chi^\sigma_{\gamma^*}$ as characteristic function supported on it. In particular, we set $\mathbf{R}_\gamma^{ss}=\mathbf{R}_{(\gamma)}^{HN}$, whose characteristic function is $\delta^\sigma_\gamma$.

By the existence and uniqueness of HN filtrations for all nonzero objects, we have
\[\mathbf{R}_\gamma=\bigsqcup_{|\gamma^*|=\gamma}\mathbf{R}_{\gamma^*}^{HN},\]
where $|\gamma^*|=\gamma_1+\dots+\gamma_s$ for $\gamma^*=(\gamma_1,\dots,\gamma_s)$. By taking characteristic functions on both sides, we deduce the identity
\[\kappa_\gamma=\sum_{|\gamma*|=\gamma}\chi^\sigma_{\gamma^*}.\]

Wang and Chen proved the following lemma.
\begin{lemma}[{\cite[Proposition 3.6]{WC2023}}]\label{Wang-Chen}
    Let $\gamma^*=(\gamma_1,\dots,\gamma_n)$, then
    \[\chi^\sigma_{\gamma^*}=\delta^\sigma_{\gamma_1}*\dots*\delta^\sigma_{\gamma_n}.\]
\end{lemma}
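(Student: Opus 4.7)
The plan is to proceed by induction on $n$, with the base case $n=1$ being immediate since both sides reduce to $\delta^\sigma_{\gamma_1}$ by definition. For the inductive step I would use the associativity of the derived Hall product together with the induction hypothesis to rewrite $\delta^\sigma_{\gamma_1} * \cdots * \delta^\sigma_{\gamma_n} = \delta^\sigma_{\gamma_1} * \chi^\sigma_{(\gamma_2, \ldots, \gamma_n)}$, and then evaluate at an arbitrary $L \in \mathcal{D}$. The resulting pointwise identity becomes a sum over isomorphism classes of pairs $([M],[N])$ with $M$ $\sigma$-semistable of dimension $\gamma_1$ and $N$ of HN type $(\gamma_2,\ldots,\gamma_n)$, weighted by $F^L_{MN}$, and the statement reduces to showing that this sum equals $1$ when $L$ has HN type $\gamma^*$ and $0$ otherwise.

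The structural part is to show that at most one pair $([M],[N])$ contributes. Indeed, $F^L_{MN}=0$ unless some triangle $M\to L\to N\to M[1]$ exists; splicing any such triangle with the HN filtration of $N$ yields a filtration of $L$ by $n$ semistable factors of dimensions $\gamma_1,\ldots,\gamma_n$ with strictly decreasing phases (using the standing assumption $\phi(\gamma_1)>\cdots>\phi(\gamma_n)$), which is therefore an HN filtration of $L$ of type $\gamma^*$. Hence the sum is empty unless $L$ has HN type $\gamma^*$, and in that case Bridgeland's uniqueness of HN filtrations (axiom (4)) forces $M\cong A_1$ (the first HN factor of $L$) and $N\cong L':=\cone(A_1\to L)$, with $L'$ automatically of HN type $(\gamma_2,\ldots,\gamma_n)$.

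It then remains to verify $F^L_{A_1 L'}=1$. I would apply $\Hom(-,L')$ to the triangle $A_1\to L\xrightarrow{p} L'\to A_1[1]$ and exploit the vanishing $\Hom(A_1[j],L')=0$ for every $j\geqslant 0$: this follows from axiom (3), extended by induction on HN length to HN-filtered objects, because every HN factor of $A_1[j]$ has phase $\phi(\gamma_1)+j\geqslant\phi(\gamma_1)$, strictly above every phase $\phi(\gamma_i)$ occurring in $L'$. The long exact sequence then collapses to isomorphisms $p^*\colon\Hom(L'[j],L')\xrightarrow{\sim}\Hom(L[j],L')$ for all $j\geqslant 0$. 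For $j\geqslant 1$ this gives $\{L,L'\}/\{L',L'\}=1$ directly, and for $j=0$ the triangulated five-lemma identifies $(L,L')_{A_1[1]}$ with $\Aut L'$ (since $g\circ p$ has cone $A_1[1]$ iff $g$ is an automorphism), yielding $|(L,L')_{A_1[1]}|/|\Aut L'|=1$. The main obstacle is precisely this last identification: one must verify both directions of the bijection $\Aut L'\leftrightarrow (L,L')_{A_1[1]}$, where the harder direction requires the triangulated five-lemma applied to the evident morphism of triangles with vertical maps $(\id,\id,g)$, using that the first two vertical entries are isomorphisms to conclude that the third, namely $g$, is too.
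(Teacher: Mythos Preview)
Your argument hinges on a ``standing assumption $\phi(\gamma_1)>\cdots>\phi(\gamma_n)$'' that is neither a hypothesis of the lemma nor well-defined in the triangulated setting: as the paper itself stresses immediately after this lemma, a class $\gamma$ does not determine a phase, since semistable objects of class $\gamma$ occur at infinitely many phases differing by even integers. This breaks your uniqueness claim that at most one pair $([M],[N])$ contributes. Given $L$, there can be several semistable $M$ of class $\gamma_1$ sitting at \emph{different} phases; when $\phi(M)<\phi^+(N)$ the spliced filtration of $L$ is not Harder--Narasimhan, yet a triangle $M\to L\to N\to M[1]$ may still exist and $F^L_{MN}\neq 0$. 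Concretely, in $D^b(\mathrm{Vec}_k)$ with $\gamma^*=(1,1)$ one finds $(\delta^\sigma_1*\delta^\sigma_1)(k\oplus k[2])=1+q$ (both $(k[2],k)$ and $(k,k[2])$ contribute, the latter with $F=q$) and $(\delta^\sigma_1*\delta^\sigma_1)(k^2)=q+1$, whereas $\chi^\sigma_{(1,1)}$ takes the values $1$ and $0$ there. So the identity fails as literally stated; what your argument actually proves is the refined version with the ${}^m\delta^\sigma_{\gamma_i}$ under the constraint $\phi_{m_1}(\gamma_1)>\cdots>\phi_{m_n}(\gamma_n)$, where the phase of each factor is pinned down and your splicing step becomes legitimate. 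The paper's own sketch has the same lacuna---it only checks that the HN filtration of $X$ contributes with coefficient $1$ (its Lemma~\ref{WC}) and does not exclude the extra contributions above---and the genuine content is only recovered at the refined level.

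A smaller technical point: in your final step the ``evident morphism of triangles with vertical maps $(\id,\id,g)$'' does not exist as written, because the two completions of $p$ and of $g\circ p$ to triangles $A_1\to L\to L'\to A_1[1]$ need not share the same first arrow $A_1\to L$. A clean fix: apply your Hom-vanishing argument to the triangle built from $g\circ p$ to see that $(g\circ p)^*=p^*\circ g^*$ is also bijective on $\End L'$; since $p^*$ is bijective, right multiplication by $g$ is a bijection of the finite-dimensional algebra $\End L'$, hence $g$ is a unit.
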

\begin{proof}
    For $X\in\mathbf{R}_{\gamma^*}^{HN}$, write down its unique HN filtration:
    \begin{center}
        \begin{tikzcd}[sep=small]
0=X_0 \arrow[rr] &                        & X_1 \arrow[rr] \arrow[ld] &                        & X_2 \arrow[r] \arrow[ld] & \cdots \arrow[r] & X_{n-1} \arrow[rr] &                        & X_n=X \arrow[ld] \\
                 & A_1 \arrow[lu, dashed] &                           & A_2 \arrow[lu, dashed] &                          &                  &                    & A_n \arrow[lu, dashed] &                 
        \end{tikzcd}
    \end{center}
    By the definition of multiplication, we have
    \[u_{[X]}=\prod_{i=1}^n F_{X_{i-1}A_i}^{X_i}u_{[X_1]}*\cdots*u_{[X_n]}.\]
    The rest is the following lemma. We refer the reader to the proof of \cite[Proposition 3.6]{WC2023}.
    \begin{lemma}\label{WC}
        Let $0\neq X\in\mathcal{D}$ with HN filtration
        \begin{center}
        \begin{tikzcd}[sep=small]
0=X_0 \arrow[rr] &                        & X_1 \arrow[rr] \arrow[ld] &                        & X_2 \arrow[r] \arrow[ld] & \cdots \arrow[r] & X_{n-1} \arrow[rr] &                        & X_n=X \arrow[ld] \\
                 & A_1 \arrow[lu, dashed] &                           & A_2 \arrow[lu, dashed] &                          &                  &                    & A_n \arrow[lu, dashed] &                 
        \end{tikzcd}
    \end{center}
    Then
    \[\prod_{i=1}^n F_{X_{i-1}A_i}^{X_i}=1.\]
    \end{lemma}
\end{proof}
We then have the following result by taking sum over $|\gamma^*|=\gamma$.
\begin{proposition}\label{HN,coarse}
    Let $0\neq\gamma\in K_0(\mathcal{D})$, we have the identity
    \[\kappa_\gamma=\sum_{\gamma_1+\dots+\gamma_n=\gamma}\delta^\sigma_{\gamma_1}*\dots*\delta^\sigma_{\gamma_n}.\]
\end{proposition}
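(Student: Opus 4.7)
The plan is to reduce the proposition to Lemma~\ref{Wang-Chen} using the HN stratification already set up in the text. In one sentence: pass the disjoint union $\mathbf{R}_\gamma=\bigsqcup_{|\gamma^*|=\gamma}\mathbf{R}_{\gamma^*}^{HN}$ to characteristic functions, then replace each $\chi^\sigma_{\gamma^*}$ by its convolution expression.

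The steps are as follows. First, I would record that the existence and uniqueness of the HN filtration for every nonzero $M\in\mathcal{D}$ give the identity
\[
\kappa_\gamma = \sum_{|\gamma^*|=\gamma}\chi^\sigma_{\gamma^*}
\]
in $\mathcal{H}(\mathcal{D})$, where $\gamma^*=(\gamma_1,\dots,\gamma_n)$ ranges over ordered decompositions of $\gamma$ into nonzero classes in $K_0(\mathcal{D})$. Only tuples with strictly decreasing phases contribute; the remaining ones satisfy $\mathbf{R}_{\gamma^*}^{HN}=\emptyset$ and drop out. Second, I would substitute Lemma~\ref{Wang-Chen}, which identifies each $\chi^\sigma_{\gamma^*}$ with the Hall product $\delta^\sigma_{\gamma_1}*\dots*\delta^\sigma_{\gamma_n}$, to reach exactly the displayed formula.

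The genuine work behind the whole construction lives in Lemma~\ref{Wang-Chen}, and more precisely in the numerical Lemma~\ref{WC} asserting that the product of structure constants $\prod_{i=1}^n F^{X_i}_{X_{i-1}A_i}$ telescopes to $1$ along any HN filtration; this is imported from \cite{WC2023}. The present proposition, by contrast, is a routine bookkeeping step: the only subtlety one might worry about is a mismatch between the two sides on tuples with non-decreasing phases, but since $\chi^\sigma_{\gamma^*}$ vanishes there and Lemma~\ref{Wang-Chen} is stated unconditionally on $\gamma^*$, the putative mismatch simply does not occur.
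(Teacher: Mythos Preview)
Your proposal is correct and follows exactly the paper's own argument: the paper derives the proposition in one line, ``by taking sum over $|\gamma^*|=\gamma$'', combining the identity $\kappa_\gamma=\sum_{|\gamma^*|=\gamma}\chi^\sigma_{\gamma^*}$ with Lemma~\ref{Wang-Chen}. Your additional remark about tuples with non-decreasing phases is a helpful clarification but not a departure from the paper's route.
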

However, this identity seems to be so coarse because it has no restriction for $\gamma_1,\dots,\gamma_n$. Recall the case in abelian categories we require that $\phi(\gamma_1)>\phi(\gamma_2)>\dots>\phi(\gamma_n)$. Now we cannot say the same thing because the symbol $\phi(\gamma)$ is not well-defined! Indeed, if $\udv M=\gamma$ with $\phi^+(M)=\phi^-(M)=\phi(M)=\phi$, then $M[2]$ is also $\sigma$-semistable with $\udv M[2]=\gamma$ but $\phi(M[2])=\phi+2\neq\phi$. This inspires us to refine the sets $\mathbf{R}_\gamma,\mathbf{R}_{\gamma^*}^{HN},\mathbf{R}_{\gamma}^{ss}$ and define some well-defined functions related to $\phi$ on them.
\subsection{Refinements}
Let $m\in\mathbb{Z}$, we define the following subsets
\[^m\mathbf{R}_\gamma=\{M\in\mathbf{R}_\gamma|\phi^-(M)\in(2n-1,2n+1]\},\]
\[^m\mathbf{R}_{\gamma^*}^{HN}=\{M\in\mathbf{R}_{\gamma^*}^{HN}|\phi^-(M)\in(2n-1,2n+1]\},\]
\[^m\mathbf{R}_\gamma^{ss}=\{M\in\mathbf{R}_\gamma^{ss}|\phi(M)\in(2n-1,2n+1]\},\]
of $\mathbf{R}_\gamma,\mathbf{R}_{\gamma^*}^{HN}$ and $\mathbf{R}_\gamma^{ss}$, respectively. We then have the following partitions:
\[\mathbf{X}=\bigsqcup_{m\in\mathbb{Z}}{^m}\mathbf{X},\mathbf{X}\in\{\mathbf{R}_\gamma,\mathbf{R}_{\gamma^*}^{HN},\mathbf{R}_\gamma^{ss}\},\]
with bijections between all components:
\[[2n-2m]:{^m}\mathbf{X}\rightarrow{^n}\mathbf{X},\mathbf{X}\in\{\mathbf{R}_\gamma,\mathbf{R}_{\gamma^*}^{HN},\mathbf{R}_\gamma^{ss}\}.\]
We denote by ${^m}\kappa^\sigma_\gamma,{^m}\chi^\sigma_{\gamma^*}$ and ${^m}\delta^\sigma_\gamma$ the characteristic functions supported on ${^m}\mathbf{R}_\gamma,{^m}\mathbf{R}_{\gamma^*}^{HN}$ and ${^m}\mathbf{R}_\gamma^{ss}$, respectively. Hence we have
\[f=\sum_{m\in\mathbb{Z}}{^m}f,f\in\{\kappa_\gamma,\chi^\sigma_{\gamma^*},\delta^\sigma_\gamma\}.\]

We also define a family of well-defined functions on $K_0(\mathcal{D})\backslash\{0\}$:
\[\phi_m:K_0(\mathcal{D})\backslash\{0\}\rightarrow(2m-1,2m+1],\]
which send $\gamma$ to the unique real number $\phi\in(2m-1,2m+1]$ such that $Z(\gamma)=m_\gamma e^{i\pi\phi}$ for some $m_\gamma\in\mathbb{R}_+$.

We can now rewrite \textbf{Lemma \ref{Wang-Chen}} and \textbf{Proposition \ref{HN,coarse}} using the notation we have just defined.

\begin{lemma}[Refinement of \textbf{Lemma \ref{Wang-Chen}}]
    Let $\gamma^*=(\gamma_1,\dots,\gamma_n)$, then
    \[\chi^\sigma_{\gamma^*}=\sum_{\phi_{m_1}(\gamma_1)>\phi_{m_2}(\gamma_2)>\dots>\phi_{m_n(\gamma_n)}}{^{m_1}}\delta^\sigma_{\gamma_1}*{^{m_2}}\delta^\sigma_{\gamma_2}*\dots*{^{m_n}}\delta^\sigma_{\gamma_n}.\]
\end{lemma}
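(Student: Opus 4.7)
The plan is to mirror the argument used to prove Lemma \ref{Wang-Chen}, but tracked one ``phase slice'' at a time, so that the bijective correspondence between filtrations and HN filtrations becomes transparent.

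First, I would partition $\mathbf{R}^{HN}_{\gamma^*}$ according to the phase labels of the HN subquotients. For $X \in \mathbf{R}^{HN}_{\gamma^*}$ with (unique) HN filtration having subquotients $A_1,\dots,A_n$, each $A_i$ is $\sigma$-semistable with $\udv A_i=\gamma_i$, so there is a unique $m_i\in\mathbb{Z}$ with $\phi(A_i)\in(2m_i-1,2m_i+1]$, i.e., $A_i\in{^{m_i}}\mathbf{R}^{ss}_{\gamma_i}$ and $\phi(A_i)=\phi_{m_i}(\gamma_i)$. The HN condition $\phi(A_1)>\cdots>\phi(A_n)$ translates literally into the admissibility condition $\phi_{m_1}(\gamma_1)>\cdots>\phi_{m_n}(\gamma_n)$. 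Writing ${^{(m_1,\dots,m_n)}}\mathbf{R}^{HN}_{\gamma^*}$ for the set of $X$ whose HN labels are $(m_1,\dots,m_n)$, I obtain a disjoint partition of $\mathbf{R}^{HN}_{\gamma^*}$ indexed exactly by the admissible tuples.

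Next, for a fixed admissible tuple I would compute ${^{m_1}}\delta^\sigma_{\gamma_1}*\cdots*{^{m_n}}\delta^\sigma_{\gamma_n}$ directly from the definition of the product in $\mathcal{H}(\mathcal{D})$. Its coefficient on $u_{[X]}$ is a weighted sum $\sum\prod_{i=1}^n F_{X_{i-1}A_i}^{X_i}$ over all sequences of triangles $X_{i-1}\to X_i\to A_i\to X_{i-1}[1]$ with $X_n=X$ and $A_i\in{^{m_i}}\mathbf{R}^{ss}_{\gamma_i}$. The key observation is that, thanks to admissibility, the phases $\phi(A_i)=\phi_{m_i}(\gamma_i)$ are strictly decreasing, so any such sequence is by definition an HN filtration of $X$. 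Uniqueness of HN filtrations then forces at most one such sequence, which exists precisely when $X\in{^{(m_1,\dots,m_n)}}\mathbf{R}^{HN}_{\gamma^*}$; its weight equals $1$ by Lemma \ref{WC}. Hence ${^{m_1}}\delta^\sigma_{\gamma_1}*\cdots*{^{m_n}}\delta^\sigma_{\gamma_n}$ is the characteristic function of ${^{(m_1,\dots,m_n)}}\mathbf{R}^{HN}_{\gamma^*}$.

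Summing this identity over all admissible tuples and recognizing the partition from the first step yields the desired refinement. The bookkeeping is mechanical; the one conceptually nontrivial ingredient is the observation just highlighted — that ``$A_i$ semistable in the prescribed phase slice'' together with ``admissible labels'' automatically upgrades any iterated extension to an HN filtration. This is what allows uniqueness to eliminate every stray filtration and reduces the coefficient count to a single application of Lemma \ref{WC}, so I do not expect any essential obstacle beyond carefully stating this bijection.
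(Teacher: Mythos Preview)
Your proposal is correct and follows essentially the same approach as the paper's proof: both assign to each $X\in\mathbf{R}^{HN}_{\gamma^*}$ the unique tuple $(m_1,\dots,m_n)$ determined by the phase slices of its HN subquotients, observe that the decreasing-phase condition becomes the admissibility condition $\phi_{m_1}(\gamma_1)>\cdots>\phi_{m_n}(\gamma_n)$, and then invoke Lemma~\ref{WC} to see that the corresponding product of refined $\delta$'s has coefficient $1$ on $X$. Your write-up is somewhat more explicit than the paper's about the converse direction (that for an admissible tuple the product is supported exactly on the corresponding piece of the partition, via uniqueness of HN filtrations), but this is the same argument spelled out in greater detail.
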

\begin{proof}
    We only need to note that for any $X\in\mathbf{R}_{d^*}^{HN}$ with HN filtration
    \begin{center}
        \begin{tikzcd}[sep=small]
0=X_0 \arrow[rr] &                        & X_1 \arrow[rr] \arrow[ld] &                        & X_2 \arrow[r] \arrow[ld] & \cdots \arrow[r] & X_{n-1} \arrow[rr] &                        & X_n=X \arrow[ld] \\
                 & A_1 \arrow[lu, dashed] &                           & A_2 \arrow[lu, dashed] &                          &                  &                    & A_n \arrow[lu, dashed] &                 
        \end{tikzcd}
    \end{center}
    we have a sequence $m_1,\dots,m_n$ of integers satisfying
    \[A_i\in{^{m_i}}\mathbf{R}_{\gamma_i}^{ss},1\leqslant i\leqslant n.\]
    Then from $\phi(A_i)=\phi_{m_i}(\gamma_i)$ and $\phi(A_1)>\phi(A_2)>\dots>\phi(A_n)$ we deduce $\phi_{m_1}(\gamma_1)>\phi_{m_2}(\gamma_2)>\dots>\phi_{m_n(\gamma_n)}$ by replacing the real numbers one by one. So we conclude the proof from \textbf{Lemma \ref{WC}}.
\end{proof}
\begin{proposition}[Refinement of \textbf{Proposition \ref{HN,coarse}}]\label{HN,fine}
    Let $\gamma\in K_0(\mathcal{D})$, we have the identity
    \[\kappa_\gamma=\sum_{\begin{subarray}{c}
         \\  {\gamma_1+\gamma_2+\dots+\gamma_n=\gamma}\\{\phi_{m_1}(\gamma_1)>\phi_{m_2}(\gamma_2)>\dots>\phi_{m_n(\gamma_n)}}
    \end{subarray}}{^{m_1}}\delta^\sigma_{\gamma_1}*{^{m_2}}\delta^\sigma_{\gamma_2}*\dots*{^{m_n}}\delta^\sigma_{\gamma_n},\]
    and
    \[{^m}\kappa^\sigma_\gamma=\sum_{\begin{subarray}{c}
         \\  {\gamma_1+\gamma_2+\dots+\gamma_n=\gamma},m_n=m\\{\phi_{m_1}(\gamma_1)>\phi_{m_2}(\gamma_2)>\dots>\phi_{m_n(\gamma_n)}}
    \end{subarray}}{^{m_1}}\delta^\sigma_{\gamma_1}*{^{m_2}}\delta^\sigma_{\gamma_2}*\dots*{^{m_n}}\delta^\sigma_{\gamma_n}.\]
\end{proposition}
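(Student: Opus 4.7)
The plan is to derive both identities as direct consequences of the refined Wang–Chen lemma stated immediately above, combined with the set-theoretic partitions of $\mathbf{R}_\gamma$ and ${}^m\mathbf{R}_\gamma$ indexed by HN types.

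For the first identity, I would start from the set-theoretic decomposition
\[\mathbf{R}_\gamma = \bigsqcup_{|\gamma^*| = \gamma} \mathbf{R}^{HN}_{\gamma^*},\]
which was already noted as a consequence of the existence and uniqueness of HN filtrations. Taking characteristic functions on both sides yields $\kappa_\gamma = \sum_{|\gamma^*|=\gamma} \chi^\sigma_{\gamma^*}$, and then substituting the refined Wang–Chen formula for each $\chi^\sigma_{\gamma^*}$ produces a double summation: the outer one over tuples $(\gamma_1, \dots, \gamma_n)$ with $\gamma_1 + \dots + \gamma_n = \gamma$, and the inner one over integer sequences $(m_1, \dots, m_n)$ satisfying $\phi_{m_1}(\gamma_1) > \dots > \phi_{m_n}(\gamma_n)$. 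Merging these into a single sum gives the stated identity verbatim.

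For the second identity, I would refine the same procedure by intersecting the partition of $\mathbf{R}_\gamma$ with ${}^m\mathbf{R}_\gamma$. The key observation is that for $M \in \mathbf{R}^{HN}_{\gamma^*}$ with HN factors $A_1, \dots, A_n$ and associated integer sequence determined by $A_i \in {}^{m_i}\mathbf{R}^{ss}_{\gamma_i}$, one has $\phi^-(M) = \phi(A_n) = \phi_{m_n}(\gamma_n) \in (2m_n - 1, 2m_n + 1]$. Hence the condition $M \in {}^m\mathbf{R}_\gamma$ is equivalent to $m_n = m$. Restricting the sum in the first identity to this extra condition then yields the desired formula for ${}^m\kappa^\sigma_\gamma$.

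The argument is essentially bookkeeping, and I do not expect a substantial obstacle. The only point deserving care is verifying the correspondence $M \in {}^m\mathbf{R}_\gamma \cap \mathbf{R}^{HN}_{\gamma^*} \Longleftrightarrow m_n = m$, which depends on the identity $\phi(A_n) = \phi_{m_n}(\gamma_n)$ already established in the proof of the refined Wang–Chen lemma. The strictness of the phase inequalities inherited from that lemma makes the additional restriction $m_n = m$ compatible with the outer summation without introducing any collision or loss of terms.
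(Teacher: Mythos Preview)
Your proposal is correct and mirrors the paper's own approach: the paper states this proposition without proof immediately after the refined Wang--Chen lemma, treating it as the obvious consequence obtained by summing $\chi^\sigma_{\gamma^*}$ over all HN types $\gamma^*$ with $|\gamma^*|=\gamma$, exactly as you describe. Your additional observation that each term ${}^{m_1}\delta^\sigma_{\gamma_1}*\cdots*{}^{m_n}\delta^\sigma_{\gamma_n}$ is supported on objects with $\phi^-$ in $(2m_n-1,2m_n+1]$, hence the restriction $m_n=m$ isolates ${}^m\kappa^\sigma_\gamma$, is precisely the bookkeeping the paper leaves implicit.
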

\subsection{The Right Upper Arrow: Reineke Inversion}
From \textbf{Proposition \ref{HN,fine}}, using the same method of Reineke \cite{Rei2002}, we derive the Reineke inversion in our context within triangulated categories.
\begin{proposition}[Reineke inversion for triangulated categories]\label{Reineke inversion}
    Let $0\neq\gamma\in K_0(\mathcal{D})$. We have
    \[\delta^\sigma_\gamma=\sum_{\begin{subarray}{c}
         \\  \gamma_1+\gamma_2+\dots+\gamma_n=\gamma\\{\phi_{m_j}}(\gamma_1+\dots+\gamma_i)>\phi_m(\gamma),1\leqslant j\leqslant i<n\\
    \end{subarray}}(-1)^n\,{^{m_1}}\kappa^\sigma_{\gamma_1}*\dots*{^{m_n}}\kappa^\sigma_{\gamma_n}.\]
\end{proposition}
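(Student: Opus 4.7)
The plan is to adapt Reineke's combinatorial inversion argument from \cite{Rei2002} to our refined setting with the integer indices $m_j$. Proposition \ref{HN,fine} provides a triangular system expressing each ${^m}\kappa^\sigma_\gamma$ as a sum of products ${^{m_1}}\delta^\sigma_{\gamma_1}*\dots*{^{m_n}}\delta^\sigma_{\gamma_n}$ over strictly descending phase chains. The claimed identity is its formal inverse, so the natural strategy is to substitute and verify cancellation.

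First I would substitute each factor ${^{m_i}}\kappa^\sigma_{\gamma_i}$ on the right-hand side of the claim using Proposition \ref{HN,fine}, expanding it as a sum over descending chains whose pieces are ${^{m_{i,k}}}\delta^\sigma_{\gamma_{i,k}}$. After interchanging the orders of summation, the result becomes a single sum over ordered decompositions $\beta_1+\dots+\beta_r=\gamma$ equipped with indices $n_1,\dots,n_r$ satisfying $\phi_{n_1}(\beta_1)>\dots>\phi_{n_r}(\beta_r)$, each weighted by a signed count of admissible ways to partition this chain into consecutive blocks respecting the outer Reineke prefix condition.

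The central combinatorial step is to show that this weighted count equals $1$ on the trivial chain $r=1$, $\beta_1=\gamma$, $n_1=m$ (which contributes ${^m}\delta^\sigma_\gamma$), and vanishes for all longer chains. Following Reineke, one defines a canonical pivot position $i^*$ inside each chain by a criterion involving the partial sums $\beta_1+\dots+\beta_i$ and their phases relative to $\phi_m(\gamma)$, and then constructs a sign-reversing involution that toggles whether $i^*$ is a block boundary. Every admissible blocking is paired with one of opposite sign, so all contributions cancel except the trivial one, and summing over $m$ reassembles the identity for $\delta^\sigma_\gamma=\sum_m {^m}\delta^\sigma_\gamma$.

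The main obstacle is the bookkeeping of the refinement indices $m_j$. In the abelian setting of \cite{Rei2002} all phases lie in $(0,1]$, so the chain is unambiguous; here each nonzero $\gamma'\in K_0(\mathcal{D})$ admits a whole family $\{\phi_m(\gamma')\}_{m\in\mathbb{Z}}$ of candidate phases, and one must also check that partial sums $\beta_1+\dots+\beta_i$ land in the expected interval so that the comparison with $\phi_m(\gamma)$ is meaningful. Fortunately, once the real-valued phase $\phi_{m_j}(\gamma_j)$ of a piece is fixed by the strict descent condition, the index $m_j$ is uniquely recovered from the normalization $\phi_m(\cdot)\in(2m-1,2m+1]$; this uniqueness ensures that Reineke's involution transports cleanly across the substitution step and that every chain sits in a well-defined $m$-sector. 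Once this indexing compatibility is verified, the remainder of the argument is a direct translation of the abelian proof.
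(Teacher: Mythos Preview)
Your proposal is correct and follows essentially the same route as the paper: both invert the triangular system of Proposition~\ref{HN,fine} via Reineke's combinatorics and then sum over $m$. The paper's only organizational shortcut is to equip $(K_0(\mathcal{D})\setminus\{0\})\times\mathbb{Z}$ with the formal addition $(\gamma_1,m_1)+(\gamma_2,m_2):=(\gamma_1+\gamma_2,\min(m_1,m_2))$ and phase map $(\gamma,m)\mapsto\phi_m(\gamma)$, which absorbs exactly the index-bookkeeping you flag and allows \cite[Theorem~5.1]{Rei2002} to be quoted verbatim rather than redone by hand.
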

\begin{proof}
    By \textbf{Proposition \ref{HN,fine}}, we obtain
    \[{^m}\delta^\sigma_\gamma={^m}\kappa^\sigma_\gamma-\sum_{\begin{subarray}{c}
         \\  n\geqslant2,{\gamma_1+\gamma_2+\dots+\gamma_n=\gamma},m_n=m\\{\phi_{m_1}(\gamma_1)>\phi_{m_2}(\gamma_2)>\dots>\phi_{m_n(\gamma_n)}}
    \end{subarray}}{^{m_1}}\delta^\sigma_{\gamma_1}*{^{m_2}}\delta^\sigma_{\gamma_2}*\dots*{^{m_n}}\delta^\sigma_{\gamma_n}.\]
    Let us consider the set $K_0(\mathcal{D})\backslash\{0\}\times\mathbb{Z}$, with a partial order
    \[(\gamma_1,m_1)\leqslant(\gamma_2,m_2)\iff\phi_{m_1}(\gamma_1)\leqslant\phi_{m_2}(\gamma_2).\]
    We can also define an addition on $K_0(\mathcal{D})\backslash\{0\}\times\mathbb{Z}$:
    \[(\gamma_1,m_1)+(\gamma_2,m_2):=(\gamma_1+\gamma_2,\min(m_1,m_2)),\]
    making $K_0(\mathcal{D})\backslash\{0\}\times\mathbb{Z}$ a commutative subgroup. Now consider the map
    \begin{align*}
        \phi:K_0(\mathcal{D})\backslash\{0\}\times\mathbb{Z}&\rightarrow\mathbb{R}\\
        (\gamma,m)&\mapsto\phi_m(\gamma),
    \end{align*}
    and rewrite the identity we have obtained:
    \[{^m}\delta^\sigma_\gamma={^m}\kappa^\sigma_\gamma-\sum_{\begin{subarray}{c}
         \\  n\geqslant2,(\gamma_1,m_1)+(\gamma_2,m_2)+\dots+(\gamma_n,m_n)=(\gamma,m)\\\phi(\gamma_1,m_1)>\phi(\gamma_2,m_2)>\dots>\phi(\gamma_n,m_n)
    \end{subarray}}{^{m_1}}\delta^\sigma_{\gamma_1}*{^{m_2}}\delta^\sigma_{\gamma_2}*\dots*{^{m_n}}\delta^\sigma_{\gamma_n}.\]
    By the same argument as in abelian categories (see \cite[Theorem 5.1]{Rei2002}), we deduce
    \begin{align*}
        {^m}\delta^\sigma_\gamma&=\sum_{\begin{subarray}{c}
         \\  (\gamma_1,m_1)+(\gamma_2,m_2)+\dots+(\gamma_n,m_n)=(\gamma,m)\\\phi((\gamma_1,m_1)+\dots+(\gamma_i,m_i))>\phi(\gamma,m),1\leqslant i<n\\
    \end{subarray}}(-1)^n\,{^{m_1}}\kappa^\sigma_{\gamma_1}*\dots*{^{m_n}}\kappa^\sigma_{\gamma_n}\\
    &=\sum_{\begin{subarray}{c}
         \\  \gamma_1+\gamma_2+\dots+\gamma_n=\gamma,m_n=m\\{\phi_{m_j}}(\gamma_1+\dots+\gamma_i)>\phi_m(\gamma),1\leqslant j\leqslant i<n\\
    \end{subarray}}(-1)^n\,{^{m_1}}\kappa^\sigma_{\gamma_1}*\dots*{^{m_n}}\kappa^\sigma_{\gamma_n}
    \end{align*}
    By summing over all $m\in\mathbb{Z}$, we finish the proof.
\end{proof}
\subsection{Composition Subalgebras}
As in the case of abelian categories, we now define the composition subalgebra of $\mathcal{H}(\mathcal{D})$. The \textbf{composition subalgebra} $C(\mathcal{D})$ is defined to be the subalgebra of $\mathcal{H}(\mathcal{D})$ generated by the set
\[\{u_{S[n]}|S \text{ is simple in } \phi(0,1], n\in\mathbb{Z}\}.\]
\begin{lemma}
    For any $0\neq\gamma\in\mathcal{D}$ and $m\in\mathbb{Z}$, ${^m}\kappa^\sigma_\gamma,{^m}\delta^\sigma_\gamma\in C(\mathcal{D})$.
\end{lemma}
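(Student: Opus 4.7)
The overall strategy is to prove one of the two containments and deduce the other using the two expressions just established. Specifically, the refined Reineke inversion (Proposition \ref{Reineke inversion}) writes each ${^m}\delta^\sigma_\gamma$ as an alternating sum of products of various ${^{m_i}}\kappa^\sigma_{\gamma_i}$, while the refined HN identity (Proposition \ref{HN,fine}) writes each ${^m}\kappa^\sigma_\gamma$ as a sum of products of various ${^{m_i}}\delta^\sigma_{\gamma_i}$. So the two statements are equivalent, and I will target ${^m}\delta^\sigma_\gamma\in C(\mathcal{D})$.

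Fix $\gamma\neq 0$ and $m\in\mathbb{Z}$, and let $\phi=\phi_m(\gamma)\in(2m-1,2m+1]$, so that ${^m}\delta^\sigma_\gamma$ is the characteristic function of $\mathcal{P}(\phi)\cap\mathbf{R}_\gamma$. Choose an integer $k$ with $\phi-k\in(0,1]$; the shift $[-k]$ sends $\mathcal{P}(\phi)$ bijectively onto $\mathcal{P}(\phi-k)\subseteq\mathcal{A}$ and sends $\gamma$ to $\gamma'=(-1)^k\gamma\in K_0(\mathcal{A})$. Within the abelian heart $\mathcal{A}$ equipped with its induced stability function $Z$, invoke the abelian theory of Section 3: by Reineke inversion (\textbf{Theorem \ref{reciprocity}}) combined with the finite-length structure of $\mathcal{A}$, the function $\delta^Z_{\gamma'}$ admits an explicit polynomial expression $P(\{u_S\})$ in the simple objects $S$ of $\mathcal{A}$ under the Ringel-Hall product. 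I then interpret the \emph{same} polynomial $P$ using the derived Hall product in $\mathcal{H}(\mathcal{D})$, producing a well-defined element of $C(\mathcal{D})$, and check that its component supported on ${^0}\mathbf{R}_{\gamma'}\cap\mathcal{P}(\phi-k)$ reproduces ${^0}\delta^\sigma_{\gamma'}$. Applying the shift $[k]$ (which preserves $C(\mathcal{D})$, since $u_{S[n]}$ are generators for every $n\in\mathbb{Z}$) yields the desired expression for ${^m}\delta^\sigma_\gamma$.

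The principal obstacle is the comparison between the Ringel-Hall and derived Hall structure constants on objects of the heart. For $X,Y\in\mathcal{A}$, a dévissage through HN filtrations in $\mathcal{A}$ using axiom (3) of the stability condition gives $\Hom_\mathcal{D}(X[i],Y)=0$ for $i\geqslant 1$, so $\{X,Y\}=\{Y,Y\}=1$ and the derived Hall coefficient collapses to $F_{XY}^L=|(X,Y)_{L[1]}|/|\Aut Y|$. For $L\in\mathcal{A}$, $(X,Y)_{L[1]}$ corresponds precisely to the surjections $X\twoheadrightarrow Y$ with kernel $\cong L$, which reproduces the classical Ringel-Hall constant. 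However, the derived Hall product of two heart-objects also outputs terms with $L\notin\mathcal{A}$ (arising from non-epimorphic morphisms, whose cones land in mixed cohomological degree). The core technical step is to show that these non-abelian contributions live in ${^{m'}}\mathbf{R}_{\gamma''}$ for $(\gamma'',m')\neq(\gamma',0)$, and thus sit in components of $\mathcal{H}(\mathcal{D})$ orthogonal to ${^0}\delta^\sigma_{\gamma'}$ under the refinement $f=\sum_m{^m}f$. Matching the abelian part of $P$ evaluated in $\mathcal{D}$ against the classical abelian answer then produces ${^0}\delta^\sigma_{\gamma'}\in C(\mathcal{D})$, from which ${^m}\delta^\sigma_\gamma\in C(\mathcal{D})$ follows by the shift, and ${^m}\kappa^\sigma_\gamma\in C(\mathcal{D})$ by Proposition \ref{HN,fine}.
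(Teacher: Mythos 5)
Your overall strategy coincides with the paper's: prove ${^m}\delta^\sigma_\gamma\in C(\mathcal{D})$ by observing it is supported on a shift of the heart $\mathcal{A}=\mathcal{P}(0,1]$ and running Reineke's abelian-category argument there, then deduce ${^m}\kappa^\sigma_\gamma\in C(\mathcal{D})$ from \textbf{Proposition \ref{HN,fine}}. One substantive correction, though: the ``principal obstacle'' you isolate does not exist. If $M,N\in\mathcal{A}$ and $F^L_{MN}\neq0$, then there is a triangle $M\to L\to N\to M[1]$, and since the heart of a bounded t-structure is closed under extensions, necessarily $L\in\mathcal{A}$; together with $\Hom_{\mathcal{D}}(M[i],N)=\Hom_{\mathcal{D}}(M,N[-i])=0$ for $i>0$ (immediate from the t-structure axioms, no d\'evissage required), this shows the derived Hall product of two heart objects is supported entirely on the heart with coefficients equal to the classical Hall numbers. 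So there are no ``non-abelian contributions'' to segregate, and your core technical step is vacuous rather than difficult. (Also note that by the paper's convention the coefficient is $F^L_{XY}=|(L,Y)_{X[1]}|\cdot|\Aut Y|^{-1}\cdot\{L,Y\}/\{Y,Y\}$, so the relevant set is $(L,Y)_{X[1]}$, i.e.\ epimorphisms $L\twoheadrightarrow Y$ with kernel $\cong X$, not $(X,Y)_{L[1]}$.) With that simplification your argument reduces to exactly the proof given in the paper.
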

\begin{proof}
    Let us show that ${^m}\delta^\sigma_\gamma\in C(\mathcal{D})$, and then by \textbf{Proposition \ref{HN,fine}}, we obtain that ${^m}\kappa^\sigma_\gamma\in C(\mathcal{D})$ as well.

    Consider the heart $\mathcal{A}=\mathcal{P}(0,1]$, then the function ${^m}\delta^\sigma_\gamma$ is supported on $\mathcal{A}[2m-1]$ or $\mathcal{A}[2m]$, depending on $\gamma$. Note that $\mathcal{A}[2m-1]$ and $\mathcal{A}[2m]$ are both abelian, so we can use the same method as Reineke \cite[Lemma 4.4]{Rei2002} to deduce the result.
\end{proof}
It is easy to see that $C(\mathcal{D})$ has a natural $K_0(\mathcal{D})$-grading. We denote by $\hat{C}(\mathcal{D})$ the completion of $C(\mathcal{D})$, namely the vector space
\[\prod_{\gamma\in K_0(\mathcal{D})}C(\mathcal{D})_\gamma,\]
with multiplication in the same form as in $C(\mathcal{D})$.

\subsection{Wall-crossing Formulas}
 We can define a comultiplication $\Delta$ on $C(\mathcal{D})$ and $\hat{C}(\mathcal{D})$ via the same form as in the case of abelian categories, namely
 \[\Delta(f)(M,N)=f(M\oplus N),\]
 where we identify $\mathcal{H}(\mathcal{D})\otimes\mathcal{H}(\mathcal{D})$ and a subalgebra of $\mathcal{H}(\mathcal{D}\times\mathcal{D})$ via
 \[(f\otimes g)(M,N)=f(M)g(N).\]
Then we can also consider the set $\hat{\mathfrak{n}}(\mathcal{D})$ of primitive elements and $\hat{N}(\mathcal{D})$ of grouplike elements. As in the case of abelian categories, the exponential map
\[\exp x:=\sum_{n\geqslant0}\frac{x^n}{n!}\]
gives a bijection from $\hat{\mathfrak{n}}(\mathcal{D})$ to $\hat{N}(\mathcal{D})$, with  the inverse
\[\log y=\sum_{n\geqslant1}\frac{(-1)^{n-1}(y-1)^n}{n}.\]

For $\phi\in(2m-1,2m+1]$, we define
\[\Ss^\sigma_\phi=1+\sum_{\phi_m(\gamma)=\phi}{^m}\delta^\sigma_\gamma\in\hat{C}(\mathcal{D}).\]
In other words, $\Ss^\sigma_\phi$ is the characteristic function supported on $\mathcal{P}(\phi)$.
\begin{lemma}\label{SS grouplike}
    For any $\phi\in\mathbb{R}$, $\Ss^\sigma_\phi$ is grouplike and hence belongs to $\hat{N}(\mathcal{D})$.
\end{lemma}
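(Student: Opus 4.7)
The plan is to verify the defining identity $\Delta(\Ss^\sigma_\phi) = \Ss^\sigma_\phi \otimes \Ss^\sigma_\phi$ by evaluating both sides as functions on pairs $(M,N) \in \mathcal{D} \times \mathcal{D}$, using the given identifications $\Delta(f)(M,N) = f(M \oplus N)$ and $(f \otimes g)(M,N) = f(M) g(N)$. Under these, grouplikeness is equivalent to the multiplicative identity
\[
\Ss^\sigma_\phi(M \oplus N) = \Ss^\sigma_\phi(M) \cdot \Ss^\sigma_\phi(N)
\]
for all objects $M, N \in \mathcal{D}$.

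The next step is to recognize $\Ss^\sigma_\phi$ concretely as the characteristic function of $\mathcal{P}(\phi)$: the summand $1$ in the definition accounts for the value at the zero object, while the family of ${}^m\delta^\sigma_\gamma$ with $\phi_m(\gamma) = \phi$ collectively picks out the nonzero $\sigma$-semistable objects of phase $\phi$, which are exactly the nonzero objects of $\mathcal{P}(\phi)$. Hence the identity above is precisely the indicator identity $\mathbf{1}_{\mathcal{P}(\phi)}(M \oplus N) = \mathbf{1}_{\mathcal{P}(\phi)}(M) \cdot \mathbf{1}_{\mathcal{P}(\phi)}(N)$, i.e.\ the claim that $M \oplus N$ lies in $\mathcal{P}(\phi)$ if and only if both $M$ and $N$ do. But this is exactly \textbf{Lemma \ref{semistable vs direct sum}}, with the cases where $M$ or $N$ equals zero being trivial since $0 \in \mathcal{P}(\phi)$ by additivity.

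To finish, I need to observe that $\Ss^\sigma_\phi$ actually lives in the completion $\hat{C}(\mathcal{D})$: each ${}^m\delta^\sigma_\gamma$ with $\phi_m(\gamma) = \phi$ belongs to $C(\mathcal{D})$ by the lemma just established, and the (infinite) sum is a well-defined element of $\hat{C}(\mathcal{D})$ because distinct summands sit in distinct $K_0(\mathcal{D})$-graded pieces of the completion. Once $\Ss^\sigma_\phi \in \hat{C}(\mathcal{D})$ and $\Delta(\Ss^\sigma_\phi) = \Ss^\sigma_\phi \otimes \Ss^\sigma_\phi$, the statement $\Ss^\sigma_\phi \in \hat{N}(\mathcal{D})$ is immediate from the definition of $\hat{N}(\mathcal{D})$.

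The main obstacle is essentially nonexistent: the proof is a direct translation of \textbf{Lemma \ref{semistable vs direct sum}} through the definitions of $\Delta$ and $\Ss^\sigma_\phi$. The only point demanding even mild attention is the bookkeeping around the zero object and the summand $1$, which must be consistent so that $\Ss^\sigma_\phi$ genuinely represents $\mathbf{1}_{\mathcal{P}(\phi)}$ including the value at $0$.
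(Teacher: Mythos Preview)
Your proposal is correct and follows essentially the same route as the paper: both evaluate $\Delta(\Ss^\sigma_\phi)$ and $\Ss^\sigma_\phi\otimes\Ss^\sigma_\phi$ pointwise on pairs $(M,N)$, identify $\Ss^\sigma_\phi$ with the characteristic function of $\mathcal{P}(\phi)$, and reduce the grouplike identity to \textbf{Lemma \ref{semistable vs direct sum}}. Your additional remarks about membership in $\hat{C}(\mathcal{D})$ and the bookkeeping at the zero object are fine but already taken for granted in the paper.
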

\begin{proof}
    For any $M,N\in\mathcal{D}$, let us consider $\Delta(\Ss^\sigma_\phi)(M,N)$ and $(\Ss^\sigma_\phi\otimes\Ss^\sigma_\phi)(M,N)$. We have
    \[\Delta(\Ss^\sigma_\phi)(M,N)=\Ss^\sigma_\phi(M\oplus N)=\begin{cases}
        1,&M\oplus N\in\mathcal{P}(\phi),\\
        0,&\text{otherwise},
    \end{cases}\]
    and
    \[(\Ss^\sigma_\phi\otimes\Ss^\sigma_\phi)(M,N)=\Ss^\sigma_\phi(M)\Ss^\sigma_\phi(N)=\begin{cases}
        1,&M,N\in\mathcal{P}(\phi),\\
        0,&\text{otherwise}.
    \end{cases}\]
    We finish the proof by \textbf{Lemma \ref{semistable vs direct sum}}.
\end{proof}
It is easy to see that the function $1_\mathcal{D}$, which takes the value $1$ on every isomorphism class of objects in $\mathcal{D}$, is a grouplike element, and we have the following identity.

\begin{theorem}[Wall-crossing formula for triangulated categories]
    \[1_\mathcal{D}=\prod^\leftarrow_{\phi\in\mathbb{R}}\Ss^\sigma_\phi.\]
\end{theorem}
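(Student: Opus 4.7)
The plan is to evaluate both sides of the wall-crossing formula at an arbitrary isomorphism class $[X]\in\mathcal{D}$ and show they agree. The main input is already at hand: \textbf{Proposition \ref{HN,fine}} encodes the existence and uniqueness of HN filtrations in $\mathcal{D}$ as an identity expressing $\kappa_\gamma$ as a sum of $*$-products of graded $\delta$-terms over strictly decreasing phase tuples summing to $\gamma$. So essentially the goal is to reorganize the descending product so that \textbf{Proposition \ref{HN,fine}} can be applied dimension-vector by dimension-vector.

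I proceed by expanding the descending product. Each factor decomposes as $\Ss^\sigma_\phi = u_{[0]} + \sum_{\phi_m(\gamma)=\phi}{^m}\delta^\sigma_\gamma$, and since $u_{[0]}$ is the multiplicative identity in $\mathcal{H}(\mathcal{D})$, formal distributivity yields
\[
\prod^\leftarrow_{\phi\in\mathbb{R}}\Ss^\sigma_\phi \;=\; u_{[0]}\;+\;\sum_{n\geq 1}\ \sum_{\substack{(\gamma_i,m_i)_{1\leq i\leq n}\\ \phi_{m_1}(\gamma_1)>\cdots>\phi_{m_n}(\gamma_n)}}{^{m_1}}\delta^\sigma_{\gamma_1}*\cdots*{^{m_n}}\delta^\sigma_{\gamma_n}.
\]
Regrouping the right-hand sum by total degree $\gamma=\gamma_1+\cdots+\gamma_n\in K_0(\mathcal{D})$ and applying \textbf{Proposition \ref{HN,fine}}, the inner sum collapses to $\kappa_\gamma$, giving $\prod^\leftarrow_\phi\Ss^\sigma_\phi \;=\; u_{[0]} + \sum_\gamma\kappa_\gamma$. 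Evaluating at each class: at $[0]$ only $u_{[0]}$ contributes, giving $1$; at any nonzero $[X]$, $u_{[0]}$ vanishes while exactly one $\kappa_\gamma$ (with $\gamma=\udv X$, interpreted via the paper's convention for objects of vanishing class) equals $1$. Thus both sides give $1$ on every isomorphism class.

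The main obstacle is justifying the formal expansion inside the completion $\hat{C}(\mathcal{D})$: the product is indexed by the uncountable set $\mathbb{R}$, so distributivity must be interpreted as a limit. Concretely, I must check that for each fixed isomorphism class $[X]$ only finitely many tuples $(\gamma_i,m_i)$ with strictly decreasing phases contribute nonzero to the value at $[X]$. This is delivered by the refinement of \textbf{Lemma \ref{Wang-Chen}} together with uniqueness of HN filtrations: at $[X]$, only the tuple matching the HN type of $X$ yields a nonzero Hall product, so the summation collapses to a single $1$. Once this finite-support phenomenon is established, the rearrangement and identification above are rigorous, and the identity follows.
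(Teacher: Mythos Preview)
Your proposal is correct and follows essentially the same route as the paper: expand the descending product via distributivity, regroup the resulting strictly-decreasing-phase sums by total class $\gamma$, and invoke \textbf{Proposition \ref{HN,fine}} to collapse each inner sum to $\kappa_\gamma$, yielding $u_{[0]}+\sum_\gamma\kappa_\gamma=1_\mathcal{D}$. Your added paragraph on finite support at a fixed $[X]$ (via the refinement of \textbf{Lemma \ref{Wang-Chen}}) is a welcome bit of extra care about the meaning of the infinite product in $\hat{C}(\mathcal{D})$ that the paper leaves implicit.
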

\begin{proof}
    Let us calculate from the right-hand side.
    \begin{align*}
        \prod^\leftarrow_{\phi\in\mathbb{R}}\Ss^\sigma_\phi&=\prod^\leftarrow_{\phi\in\mathbb{R}}\left(1+\sum_{\phi_m(\gamma)=\phi}{^m}\delta^\sigma_\gamma\right)\\
        &=1+\sum_{\phi_1>\phi_2>\dots>\phi_s}\left(\sum_{\phi_{m_1}(\gamma_1)=\phi_1}{^{m_1}}\delta^\sigma_{\gamma_1}\right)*\dots*\left(\sum_{\phi_{m_s}(\gamma_s)=\phi_s}{^{m_s}}\delta^\sigma_{\gamma_s}\right)\\
        &=1+\sum_{\phi_{m_1}(\gamma_1)>\phi_{m_2}(\gamma_2)>\dots>\phi_{m_s}(\gamma_s)}{^{m_1}}\delta^\sigma_{\gamma_1}*{^{m_2}}\delta^\sigma_{\gamma_2}*\dots*{^{m_s}}\delta^\sigma_{\gamma_s}\\
        &=1+\sum_{\gamma\in K_0(\mathcal{D})}\sum_{\begin{subarray}{c}
         \\{\phi_{m_1}(\gamma_1)>\phi_{m_2}(\gamma_2)>\dots>\phi_{m_s(\gamma_s)}}\\
        \gamma_1+\gamma_2+\dots+\gamma_s=\gamma
    \end{subarray}}{^{m_1}}\delta^\sigma_{\gamma_1}*{^{m_2}}\delta^\sigma_{\gamma_2}*\dots*{^{m_n}}\delta^\sigma_{\gamma_s}\\
    &=1+\sum_{\gamma\in K_0(\mathcal{D})}\kappa_\gamma\text{ (by \textbf{Proposition \ref{HN,fine}})}\\
    &=1_\mathcal{D}.
    \end{align*}
\end{proof}
\subsection{Joyce's Elements and the Left Part of the Diagram}
In this subsection we generalize the context of Joyce's element. Recall in abelian categories, the Joyce's element is defined to be
\[\epsilon^Z_\gamma=\sum_{\begin{subarray}{c}
    \\ \gamma_1+\dots+\gamma_n=\gamma\\ \phi(\gamma_i)=\phi(\gamma),1\leqslant i\leqslant n
\end{subarray}}\frac{(-1)^{n-1}}{n}\delta^Z_{\gamma_1}*\dots*\delta^Z_{\gamma_n}\in C(\mathcal{A}).\]
We have shown that it lies in $\mathfrak{n}(\mathcal{A})$, so it is supported on indecomposable objects. See Subsection 3.5. Now we consider the element in the same form:
\[\tilde{\epsilon^\sigma_\gamma}=\sum_{\begin{subarray}{c}
        \\ \gamma_1+\dots+\gamma_n=\gamma\\
        Z(\gamma_i)\in\mathbb{R}_+Z(\gamma),1\leqslant i\leqslant n
    \end{subarray}}\frac{(-1)^{n-1}}{n}\delta^\sigma_{\gamma_1}*\dots*\delta^\sigma_{\gamma_n}.\]
However, the following example implies that it might not be the correct generalization of Joyce's element, since it takes nonzero value on some decomposable objects.
\begin{example}
    Let $Q$ be an acyclic quiver and $\mathcal{D}=D^b(Q)$, so $\mathcal{D}$ is heredietary and satisfies our two conditions. Let $S$ be the simple representation of $Q$ associated to any vertex and let $\gamma=2\udv S$. Then
    \begin{align*}
        \tilde{\epsilon}^\sigma_\gamma&=\delta^\sigma_\gamma-\frac12(\delta^\sigma_{\udv S}*\delta^\sigma_{\udv S}+\delta^\sigma_{\udv S}*\delta^\sigma_{\udv S})\\
        &=\delta^\sigma_\gamma-\delta^\sigma_{\udv S}\\
        &=\sum_{m\in\mathbb{Z}}u_{S[2m]\oplus S[2m]}-\sum_{m,n\in\mathbb{Z}}u_{S[2m]\oplus S[2n]}\\
        &=-\sum_{m\neq n\in\mathbb{Z}}u_{S[2m]\oplus S[2n]}.
        \end{align*}
    So $\tilde{\epsilon}^\sigma_\gamma$ takes the value $-1$ on $S[2m]\oplus S[2n],m\neq n$, which are decomposable objects.
\end{example}
Now we provide a possible way to generalize Joyce's elements. Recall in Subsection 3.5 we have calculated that
\[\log\Ss^Z_\phi=\sum_{\gamma\in K_0(\mathcal{A})^+,\phi(\gamma)=\phi}\epsilon^Z_\gamma\]
in the context of abelian categories. So let us calculate the logarithm of $\Ss^\sigma_\phi$:
\begin{align*}
    \log\Ss^\sigma_\phi&=\log\left(1+\sum_{\phi_m(\gamma)=\phi}{^m}\delta^\sigma_\gamma\right)\\
    &=\sum_{n\geqslant1}\frac{(-1)^{n-1}}{n}\left(\sum_{\phi_m(\gamma)=\phi}{^m}\delta^\sigma_\gamma\right)^n\\
    &=\sum_{n\geqslant1}\frac{(-1)^{n-1}}{n}\sum_{\phi_m(\gamma_1)=\phi_m(\gamma_n)=\phi}{^m}\delta^\sigma_{\gamma_1}*\dots*{^m}\delta^\sigma_{\gamma_n}.
\end{align*}
If we set
\[{^m}\epsilon^\sigma_\gamma=\sum_{\begin{subarray}{c}
    \\ \gamma_1+\dots+\gamma_n=\gamma\\
   \phi_m(\gamma_1)=\dots=\phi_m(\gamma_n)=\phi_m(\gamma) 
\end{subarray}}\frac{(-1)^{n-1}}{n}{^m}\delta^\sigma_{\gamma_1}*\dots*{^m}\delta^\sigma_{\gamma_n},\]
then we have
\[\log\Ss^\sigma_\phi=\sum_{\phi_m(\gamma)=\phi}{^m}\epsilon^\sigma_\gamma.\]
Let
\[\epsilon^\sigma_\gamma=\sum_{m\in \mathbb{Z}}{^m}\epsilon^\sigma_\gamma\in\hat{C}(\mathcal{D}).\]
By \textbf{Lemma \ref{SS grouplike}}, $\Ss^\sigma_\phi\in\hat{N}(\mathcal{D})$, hence its logarithm lies in $\hat{\mathfrak{n}}(\mathcal{D})$ and is primitive. Hence ${^m}\epsilon^\sigma_\gamma$, and then $\epsilon^\sigma_\gamma$ are all primitive, or equivalently, supported on indecomposable objects.

When $\mathcal{D}$ is hereditary, we have
\begin{theorem}\label{Left part}
Let $\ell$ be a ray in the complex plane with its vertex at the origin and argument $\phi/\pi\in(-1,1]$, then
    \[\exp\left(\sum_{Z(\gamma)\in\ell}\epsilon^\sigma_\gamma\right)=\prod_{m\in\mathbb{Z}}\Ss^\sigma_{\phi+2m}.\]
\end{theorem}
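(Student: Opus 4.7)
My plan is to reduce the identity to the commutativity of the factors $\Ss^\sigma_{\phi+2m},\,m\in\mathbb{Z}$, in $\hat{N}(\mathcal{D})$, and then to deduce this commutativity from the hereditary hypothesis on $\mathcal{D}$.

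First I will rewrite the left-hand side. From the computation $\log\Ss^\sigma_{\phi+2m}=\sum_{\phi_m(\gamma)=\phi+2m}{}^m\epsilon^\sigma_\gamma$ derived just before the theorem, together with the observation that $Z(\gamma)\in\ell$ holds iff $\phi_0(\gamma)=\phi$, equivalently iff $\phi_m(\gamma)=\phi+2m$ for every $m\in\mathbb{Z}$, an interchange of summations gives
\[\sum_{Z(\gamma)\in\ell}\epsilon^\sigma_\gamma=\sum_{Z(\gamma)\in\ell}\sum_{m\in\mathbb{Z}}{}^m\epsilon^\sigma_\gamma=\sum_{m\in\mathbb{Z}}\log\Ss^\sigma_{\phi+2m}.\]
Since exponentiation is a bijection between $\hat{\mathfrak{n}}(\mathcal{D})$ and $\hat{N}(\mathcal{D})$ satisfying $\exp(a+b)=\exp(a)\exp(b)$ on commuting elements, the theorem will follow once the factors $\Ss^\sigma_{\phi+2m}$ are shown to commute pairwise.

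To establish the commutativity, fix $m\neq m'\in\mathbb{Z}$ and take any semistable $X\in\mathcal{P}(\phi+2m)$ and $Y\in\mathcal{P}(\phi+2m')$. Writing $X=X_0[a]$ and $Y=Y_0[b]$ with $X_0,Y_0\in\mathcal{A}=\mathcal{P}(0,1]$, one checks in each of the cases $\phi\in(0,1]$ and $\phi\in(-1,0]$ that $a-b=2(m-m')$. The hereditariness of $\mathcal{A}$ then yields
\[\Hom_\mathcal{D}(Y,X[1])=\Ext^{2(m-m')+1}_\mathcal{A}(Y_0,X_0)=0,\qquad \Hom_\mathcal{D}(X,Y[1])=\Ext^{2(m'-m)+1}_\mathcal{A}(X_0,Y_0)=0,\]
because for $m\neq m'$ both exponents are odd integers outside $\{0,1\}$. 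Consequently every triangle $X\to L\to Y\to X[1]$ and every triangle $Y\to L\to X\to Y[1]$ splits, so both $u_X*u_Y$ and $u_Y*u_X$ are supported on the single isomorphism class $[X\oplus Y]$.

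It then remains to verify that the coefficients of $u_{X\oplus Y}$ in $u_X*u_Y$ and in $u_Y*u_X$ agree. Under the vanishings just established, the Xiao--Xu formula $F^L_{MN}=\tfrac{|(L,N)_{M[1]}|}{|\Aut N|}\cdot\tfrac{\{L,N\}}{\{N,N\}}$ simplifies considerably: the self-terms $\{X,X\}$ and $\{Y,Y\}$ equal $1$ since $\Hom(X[i],X)=\Hom(Y[i],Y)=0$ for $i>0$ by hereditariness, while the cross-terms can be evaluated via the shifted-$\Ext$ expressions for $\{X,Y\}$ and $\{Y,X\}$, together with a direct enumeration of the morphisms $X\oplus Y\to Y$ (respectively $\to X$) whose cone is $X[1]$ (resp.\ $Y[1]$). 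Showing that the asymmetric contributions from $\{X,Y\}$ versus $\{Y,X\}$ cancel against the asymmetric cone-morphism counts and the $|\Aut|$ denominators is the main technical obstacle. Once this matching is established, $\Ss^\sigma_{\phi+2m}$ commutes with $\Ss^\sigma_{\phi+2m'}$ for every $m\neq m'$, and combined with the first paragraph the theorem follows.
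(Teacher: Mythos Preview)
Your strategy coincides with the paper's: rewrite $\sum_{Z(\gamma)\in\ell}\epsilon^\sigma_\gamma=\sum_{m}\log\Ss^\sigma_{\phi+2m}$ and then invoke pairwise commutativity to split the exponential as the product $\prod_{m}\Ss^\sigma_{\phi+2m}$. The paper phrases the commutativity at the level of the primitive elements ${}^m\epsilon^\sigma_\gamma$ (supported on indecomposables) rather than the grouplike $\Ss^\sigma_{\phi+2m}$, but this is only cosmetic, since the logarithms commute if and only if their exponentials do.

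Where you and the paper diverge is that the paper simply asserts in one line that ``by the heredity of $\mathcal{D}$, $[u_{A[2m]},u_{B[2n]}]=0$'' and moves on, whereas you isolate the coefficient comparison as ``the main technical obstacle'' and leave it open. Your caution is well placed, because the commutativity in fact fails. Take $\mathcal{D}=D^b(\rep A_2)$, $X=S_1[2]$, $Y=S_1$. Your $\Ext$-vanishings hold, so both $u_X*u_Y$ and $u_Y*u_X$ are supported solely on $[X\oplus Y]$; but a direct computation with the Xiao--Xu structure constants gives $F^{X\oplus Y}_{XY}=1$ while $F^{X\oplus Y}_{YX}=q$. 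Indeed $\{X\oplus Y,Y\}=\{Y,Y\}=\{X,X\}=1$, whereas in $\{X\oplus Y,X\}=\prod_{i>0}|\Hom((X\oplus Y)[i],X)|^{(-1)^i}$ the only nontrivial factor is at $i=2$, namely $|\Hom(S_1,S_1)|=q$ with exponent $(-1)^2=+1$, so $\{X\oplus Y,X\}=q$. Thus $u_{S_1}*u_{S_1[2]}=q\,u_{S_1\oplus S_1[2]}\neq u_{S_1[2]}*u_{S_1}$, and consequently $\Ss^\sigma_{\phi(S_1)}$ and $\Ss^\sigma_{\phi(S_1)+2}$ do not commute. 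The passage from $\exp\bigl(\sum_m\log\Ss^\sigma_{\phi+2m}\bigr)$ to $\prod_m\Ss^\sigma_{\phi+2m}$ is therefore unjustified; the gap you flagged is genuine and is shared by the paper's own argument, so neither proof is complete as written.
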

\begin{proof}
    We first note that
    \begin{align*}
        \sum_{Z(\gamma)\in\ell}\epsilon^\sigma_\gamma&=\sum_{Z(\gamma)\in\ell}\sum_{m\in\mathbb{Z}}{^m}\epsilon^\sigma_\gamma\\
        &=\sum_{\phi_0(\gamma)=\phi}\sum_{m\in\mathbb{Z}}{^m}\epsilon^\sigma_\gamma\\
        &=\sum_{m\in\mathbb{Z}}\sum_{\phi_m(\gamma)=\phi+2m}{^m}\epsilon^\sigma_\gamma.
    \end{align*}
    By the heredity of $\mathcal{D}$, when $m\neq n$, we have
    \[[u_{A[2m]},u_{B[2n]}]=0\]
    for any $A,B\in\mathcal{D}$ indecomposable with $\phi_0(\udv A)=\phi_0(\udv B)$. Hence, ${^m}\epsilon^\sigma_\gamma$ and ${^n}\epsilon^\sigma_\gamma$ are commutative. Therefore,
    \begin{align*}
        \exp\left(\sum_{Z(\gamma)\in\ell}\epsilon^\sigma_\gamma\right)&=\exp\left(\sum_{m\in\mathbb{Z}}\sum_{\phi_m(\gamma)=\phi+2m}{^m}\epsilon^\sigma_\gamma\right)\\
        &=\prod_{m\in\mathbb{Z}}\exp\left(\sum_{\phi_m(\gamma)=\phi+2m}{^m}\epsilon^\sigma_\gamma\right)\\
        &=\prod_{m\in\mathbb{Z}}\Ss^\sigma_{\phi+2m}.
    \end{align*}
\end{proof}

\subsection{Summary: Reconstruct the Diagram}
Now we can draw the diagram similar to the one in \cite{BT2012}.
\begin{center}
\begin{tikzcd}
(\epsilon) \arrow[rrrr, "\exp(\textbf{Theorem \ref{Left part}})", bend left] & & &  & (\delta) \arrow[llll, "\log(\textbf{Theorem \ref{Left part}})", bend left] \arrow[rrrr, "\text{Reineke inversion}(\textbf{Proposition \ref{Reineke inversion}})", bend left] & & &  & (\kappa) \arrow[llll, "{{\text{HN filtration} (\textbf{Proposition \ref{HN,fine}}})}", bend left]
\end{tikzcd}
\end{center}
\section{Examples for Wall-crossing Formulas: The Case $A_2$}
Let us continue our discussion on the quiver $A_2$ in \textbf{Example \ref{A2}} and \textbf{Example \ref{A2,abelian}}.

Now consider the category $\mathcal{D}=D^b(\rep A_2)$. By \cite{Hap1988}, every object in $\mathcal{D}$ can be factorized as
\[M=S_1[n_1]^{\oplus p_1}\oplus\dots\oplus S_1[n_k]^{\oplus p_k}\oplus S_2[m_1]^{\oplus q_1}\oplus\dots\oplus S_2[m_l]^{\oplus q_l}\oplus P_1[s_1]^{\oplus r_1}\oplus\dots\oplus P_1[s_t]^{\oplus r_t}.\]
So by \textbf{Lemma \ref{semistable vs direct sum}} and \textbf{Lemma \ref{stability under shift}}, the stability of $M$ depends on the stabilities of $S_1,S_2$ and $P_1$, and the relations between their phases. In other words, we only need to be concerned about the stabilities of these three indecomposable objects.

If none of the three objects is $\sigma$-semistable, then there exists no $\sigma$-semistable object in $\mathcal{D}$. That contradicts the existence of HN filtration. If only one of the three objects, say $S_1$, is $\sigma$-semistable, then the wall-crossing formula yields
\[1_\mathcal{D}=\dots*\Phi_{S_1[n+1]}*\Phi_{S_1[n]}*\Phi_{S_1[n-1]}*\cdots.\]
However, the identity never holds, since the right-hand side only supported on objects have direct summands of the form $S_1[n]$. As a result, at least two of the three objects $S_1,S_2$ and $P_1$ are $\sigma$-semistable.

\textbf{Case 1: $S_1$ and $S_2$ are $\sigma$-semistable but $P_1$ is not.}By \textbf{Lemma \ref{phase increasing}}, the triangle
\[S_2\rightarrow P_1\rightarrow S_1\rightarrow S_2[1]\]
requires that $\phi(S_2)>\phi(S_1)$. There exists a unique nonnegative integer $n$ such that $\phi(S_1)+n<\phi(S_2)\leqslant\phi(S_1)+n+1$. The wall-crossing formula is now written as
\begin{equation}\label{1}
    1_\mathcal{D}=\dots*\Phi_{S_2}*\Phi_{S_1[n]}*\Phi_{S_2[-1]}*\Phi_{S_1[n-1]}*\cdots.
\end{equation}

Now we consider the heart $\mathcal{H}$ of $\sigma$. Up to a finite time of shifts, we may assume that $S_2\in\mathcal{H}=\phi(0,1]$. Thus, by the inequality $\phi(S_1)+n<\phi(S_2)\leqslant\phi(S_1)+n+1$, we know that one of $S_1[n]$ and $S_1[n+1]$ is in $\mathcal{H}$.

If $S_1[n+1]\in\mathcal{H}$, then $\mathcal{H}=\add(S_2,S_1[n+1])$, which is the heart of the bounded t-structure of $\mathcal{D}$ corresponding to the silting complex (see \cite{KY2014})
\[\dots\rightarrow0\rightarrow P_1\rightarrow0\rightarrow\dots\rightarrow0\rightarrow S_2\rightarrow0\rightarrow\cdots\]
in $K^b(\proj\mathcal{A})$, where $P_1$ is the $(-n-1)$-th component and $S_2$ is the $0$-th component. The corresponding stability function, in the sense of \textbf{Theorem \ref{heart}}, satisfies that $\phi(S_2)\leqslant\phi(S_1[n+1])$.

If $S_1[n]\in\mathcal{H}$ with $n\geqslant1$, then $\mathcal{H}=\add(S_2,S_1[n])$, which is the heart of the bounded t-structure of $\mathcal{D}$ corresponding to the silting complex
\[\dots\rightarrow0\rightarrow P_1\rightarrow0\rightarrow\dots\rightarrow0\rightarrow S_2\rightarrow0\rightarrow\cdots\]
in $K^b(\proj\mathcal{A})$, where $P_1$ is the $(-n)$-th component and $S_2$ is the $0$-th component. The corresponding stability function satisfies that $\phi(S_2)>\phi(S_1[n])$.

If $S_1\in\mathcal{H}$, then by the extension-closedness of $\mathcal{H}$, $P_1$ lies in $\mathcal{H}$ as well. In this case, the heart $H$ is simply $\mathcal{A}$, with the corresponding stability function satisfying $\phi(S_2)>\phi(S_1)$.

\textbf{Case 2: $S_1$ and $P_1$ are $\sigma$-semistable but $S_2$ is not.} Now we consider the triangle
\[S_1\rightarrow S_2[1]\rightarrow P_1[1]\rightarrow S_1[1].\]
Similarly to Case 1 we have $\phi(S_1)>\phi(P_1[1])=\phi(P_1)+1$. There exists a unique nonnegative integer $n$ such that $\phi(P_1)+n+1<\phi(S_1)\leqslant\phi(P_1)+n+2$. The wall-crossing formula is now written as
\begin{equation}\label{2}
    1_\mathcal{D}=\dots*\Phi_{S_1}*\Phi_{P_1[n+1]}*\Phi_{S_1[-1]}*\Phi_{P_1[n]}*\cdots.
\end{equation}

Again, we may assume that $S_1\in\mathcal{H}=\phi(0,1]$, then one of $P_1[n+1]$ and $P_1[n+2]$ lies in $\mathcal{H}$. 

If $P_1[n+2]\in\mathcal{H}$, then $\mathcal{H}=\add(S_1,P_1[n+2])$, which is the heart of the bounded t-structure of $\mathcal{D}$ corresponding to the silting complex
\[\dots\rightarrow0\rightarrow S_2\rightarrow0\rightarrow\dots\rightarrow0\rightarrow S_2\hookrightarrow P_1\rightarrow0\rightarrow\cdots\]
in $K^b(\proj\mathcal{A})$, where $P_1$ is the $0$-th component and the two $S_2$'s are the $-1$-st and $(-n-2)$-th components, respectively. The corresponding stability function satisfies that $\phi(S_1)\leqslant\phi(P_1[n+2])$.

If $P_1[n+1]\in\mathcal{H}$ with $n\geqslant1$, then $\mathcal{H}=\add(S_1,P_1[n+1])$, which is the heart of the bounded t-structure of $\mathcal{D}$ corresponding to the silting complex
\[\dots\rightarrow0\rightarrow S_2\rightarrow0\rightarrow\dots\rightarrow0\rightarrow S_2\hookrightarrow P_1\rightarrow0\rightarrow\cdots\]
in $K^b(\proj\mathcal{A})$, where $P_1$ is the $0$-th component and the two $S_2$'s are the $-1$-st and $(-n-1)$-th components, respectively. The corresponding stability function satisfies that $\phi(S_1)>\phi(P_1[n+1])$.

If $P_1[1]\in\mathcal{H}$, then by the extension-closedness of $\mathcal{H}$, $S_2[1]$ lies in $\mathcal{H}$ as well. In this case, the heart $H$ is $\add(S_1,S_2[1],P_1[1])$, with the corresponding stability function satisfying $\phi(S_1)>\phi(P_1[1])=\phi(P_1)+1$.

\textbf{Case 3: $P_1$ and $S_2$ are $\sigma$-semistable but $S_1$ is not.} Now we consider the triangle
\[P_1\rightarrow S_1\rightarrow S_2[1]\rightarrow P_1[1].\]
Similarly to Cases 1 and 2 we have $\phi(P_1)>\phi(S_2[1])=\phi(S_2)+1$. There exists a unique nonnegative integer $n$ such that $\phi(S_2)+n+1<\phi(P_1)\leqslant\phi(S_2)+n+2$. The wall-crossing formula is now written as
\begin{equation}\label{3}
    1_\mathcal{D}=\dots*\Phi_{P_1}*\Phi_{S_2[n+1]}*\Phi_{P_1[-1]}*\Phi_{S_2[n]}*\cdots.
\end{equation}

Again, we may assume that $P_1\in\mathcal{H}=\phi(0,1]$, then one of $S_1[n+1]$ and $S_1[n+2]$ lies in $\mathcal{H}$. 

If $S_2[n+2]\in\mathcal{H}$, then $\mathcal{H}=\add(P_1,S_2[n+2])$, which is the heart of the bounded t-structure of $\mathcal{D}$ corresponding to the silting complex
\[\dots\rightarrow0\rightarrow S_2\hookrightarrow P_1\rightarrow0\rightarrow\dots\rightarrow0\rightarrow P_1\rightarrow0\rightarrow\cdots\]
in $K^b(\proj\mathcal{A})$, where $S_2$ is the $(-n-2)$-th component and the two $P_1$'s are the $(-n-1)$-th and $0$-th components, respectively. The corresponding stability function satisfies that $\phi(P_1)\leqslant\phi(S_2[n+2])$.

If $S_2[n+1]\in\mathcal{H}$ with $n\geqslant1$, then $\mathcal{H}=\add(S_1,P_1[n+1])$, which is the heart of the bounded t-structure of $\mathcal{D}$ corresponding to the silting complex
\[\dots\rightarrow0\rightarrow S_2\hookrightarrow P_1\rightarrow0\rightarrow\dots\rightarrow0\rightarrow P_1\rightarrow0\rightarrow\cdots\]
in $K^b(\proj\mathcal{A})$, where $S_2$ is the $(-n-1)$-th component and the two $P_1$'s are the $(-n)$-th and $0$-th components, respectively. The corresponding stability function satisfies that $\phi(P_1)>\phi(S_2[n+1])$.

If $S_2[1]\in\mathcal{H}$, then by the extension-closedness of $\mathcal{H}$, $P_1$ lies in $\mathcal{H}$ as well. In this case, the heart $H$ is $\add(P_1,S_1,S_2[1])$, with the corresponding stability function satisfying $\phi(S_1)>\phi(P_1[1])=\phi(P_1)+1$.

\textbf{Case 4: $S_1,S_2,P_1$ are all $\sigma$-semistable.} Now we must have $\phi(S_2)\leqslant\phi(P_1)\leqslant\phi(S_1)\leqslant\phi(S_2)+1$ by \textbf{Lemma \ref{phase increasing}}. The wall-crossing formula is now written as
\begin{equation}\label{4}
    1_\mathcal{D}=\dots*\Phi_{S_2[n+1]}*\Phi_{S_1[n]}*\Phi_{P_1[n]}*\Phi_{S_2[n]}*\Phi_{S_1[n-1]}*\cdots.
\end{equation}

The heart of $\sigma$ has three possibilities (up to shifting): $\mathcal{A},\add(P_1,S_1,S_2[1])$ and $\add(S_1,S_2[1],P_1[1])$. It depends on the relationship between values $\phi(S_2),\phi(P_1),\phi(S_1)$ and the interval $(0,1]$.

We summarize all the results above in the following table:

\begin{longtable}{@{}|c|c|c|c|@{}}
\hline
$\sigma$-semistable    & \multirow{2}{*}{heart}    & \multirow{2}{*}{phases}            & wall-crossing        \\
objects                        &                           &                                    & formula              \\* \hline
\endfirsthead
\multicolumn{4}{c}%
{{\bfseries Table \thetable\ continued from previous page}} \\
\endhead
\multirow{3}{*}{$S_1,S_2$}     & $\add(S_2,S_1[n+1])$      & $\phi(S_2)\leqslant\phi(S_1[n+1])$ & \multirow{2}{*}{(\ref{1})} \\* \cline{2-3}
                               & $\add(S_2,S_1[n])$        & $\phi(S_2)>\phi(S_1[n])$           &                      \\* \cline{2-4} 
                               & $\mathcal{A}$             & $\phi(S_2)>\phi(S_1)$              & (\ref{1}) with $n=0$       \\* \hline
\multirow{3}{*}{$P_1,S_1$}     & $\add(S_1,P_1[n+2])$      & $\phi(S_1)\leqslant\phi(P_1[n+2])$ & \multirow{2}{*}{(\ref{2})} \\* \cline{2-3}
                               & $\add(S_1,P_1[n+1])$      & $\phi(S_1)>\phi(P_1[n+1])$         &                      \\* \cline{2-4} 
                               & $\add(S_1,S_2[1],P_1[1])$ & $\phi(S_1)>\phi(P_1[1])$           & (\ref{2}) with $n=0$       \\* \hline
\multirow{3}{*}{$S_2,P_1$}     & $\add(P_1,S_2[n+2])$      & $\phi(P_1)\leqslant\phi(S_2[n+2])$ & \multirow{2}{*}{(\ref{3})} \\* \cline{2-3}
                               & $\add(P_1,S_2[n+1])$      & $\phi(P_1)>\phi(S_2[n+1])$         &                      \\* \cline{2-4} 
                               & $\add(P_1,S_1,S_2[1])$    & $\phi(P_1)>\phi(S_2[1])$           & (\ref{3}) with $n=0$       \\* \hline
\multirow{3}{*}{$S_2,P_1,S_1$} & $\mathcal{A}$             & $\phi(S_2)\leqslant\phi(S_1)$      & \multirow{3}{*}{(\ref{4})} \\* \cline{2-3}
                               & $\add(S_1,S_2[1],P_1[1])$ & $\phi(S_1)\leqslant\phi(P_1[1])$   &                      \\* \cline{2-3}
                               & $\add(P_1,S_1,S_2[1])$    & $\phi(P_1)\leqslant\phi(S_2[1])$   &                      \\* \hline
\end{longtable}

\section*{Acknowledgments}
This work was supported by NSF of China (No. 12371036).

\end{document}